\providecommand{\keywords}[1]{\noindent \textbf{\textit{Keywords:}} #1}
\newtheorem{theorem}{Theorem}[section]
\newtheorem{proposition}[theorem]{Proposition}
\newtheorem{lemma}[theorem]{Lemma}
\newtheorem{corollary}[theorem]{Corollary}
\theoremstyle{definition}
\newtheorem{definition}[theorem]{Definition}
\newtheorem{remark}[theorem]{Remark}
\theoremstyle{remark}
\newtheorem*{notation}{Notation}
\newcounter{claimcount}
\newcommand{\framedH}[1]{\mathcal{H}^{#1}} 
\newcommand{\framedHn}{\mathcal{H}^n} 
\DeclareMathOperator{\PSL}{PSL} 
\newcommand{\bH}{\mathbb H}
\DeclareMathOperator{\R}{\mathbb R}		
\DeclareMathOperator{\bZ}{\mathbb Z}		
\DeclareMathOperator{\CC}{\mathsf{CC}} 
\DeclareMathOperator{\Sub}{Sub}			
\DeclareMathOperator{\inj}{inj}			
\DeclareMathOperator{\diam}{diam}			
\newcommand{\quotient}[2]{\left.\raisebox{.1em}{$#1\!$}\middle/\raisebox{-.1em}{$#2$}\right.}
\newcommand{\bquotient}[2]{\left.\raisebox{-.1em}{$#1$}\middle\backslash\raisebox{.1em}{$#2$}\right.}
\begin{document}

\begin{abstract}
	In the space  $\mathcal{H}^2$ of hyperbolic surfaces decorated with a base unit vector, the topology induced by the Gromov-Hausdorff convergence  coincides with the Chabauty topology on the space of discrete torsion-free subgroups of $\rm{PSL}_2(\mathbb{R})$. Using paths constructed from changing the Fenchel-Nielsen coordinates and shrinking simple closed curves to cusps, we demonstrate path-connectivity of  $\mathcal{H}^2$ and some of its subspaces. 
\end{abstract}

\title{The Space of Vectored Hyperbolic Surfaces is Path-Connected}
\author{Sangsan Warakkagun}

\address{Beijing Institute of Mathematical Sciences and Applications, Huairou District, Beijing, China}
\curraddr{Department of Mathematics, Faculty of Science, Khon Kaen University, Khon Kaen, 40002, Thailand}

\email{sangwa@kku.ac.th}
\keywords{Gromov-Hausdorff convergence, vectored hyperbolic surfaces, Chabauty topology, hyperbolic surfaces of infinite-type}
\subjclass[2000]{57K20}

\maketitle

	
	\section{Introduction}
	Let the space of all vectored hyperbolic surfaces
	\[
	\framedH2 := \quotient{\Set{ (X,p,\textbf{v}) :\begin{array}{c}
				X \text{ an oriented hyperbolic surface,} \\ 
				p \in X, \textbf{v} \in T^1_p X 
	\end{array}}\!}{\!\begin{array}{c}
	\text{orientation- and base } \\ \text{vector-preserving isometry}
\end{array}}
	\]
	be endowed with the Gromov-Hausdorff topology (see Definition \ref{defn: GH}). All underlying surfaces in $\framedH2$ are connected, oriented, complete, and have no boundary. These surfaces can be of finite of infinite topological type. Informally, two vectored hyperbolic surfaces are near in $\framedH2$ if there is a correspondence between points in large compact balls centered at the basepoints that approximates an isometry and also respects the base vectors. 
	
	An alternate perspective of $\framedH2$ comes from the Chabauty topology. Letting $G= \PSL_2(\R)$, we define  $\Sub(G)$ to be the space of closed subgroups equipped with the Chabauty convergence, see Definition \ref{defn: Chabauty}. Then, 
	\[ \framedH2 \approx \Sub_{DT}(G):= \{ \text{discrete torsion-free subgroups of } G\} \subset \Sub(G). \]
	More precisely, if we fix a unit vector $(p_{\text{base}}, \textbf{v}_{base})$ in the unit tanget bundle $T^1\bH^2$, a homeomorphism is given by  
	\[
	\Phi: \Sub_{DT}(G)  \to \ \framedH2, \qquad
	\Gamma \mapsto \ (\bquotient{\Gamma}{\bH^2},\pi_{\Gamma}(p_{\text{base}}), d\pi_{\Gamma}(\textbf{v}_{base})),
	\]
	where $\pi_\Gamma: \bH^2 \to \bquotient{\Gamma}{\bH^2}$ is the projection and $d\pi_\Gamma: T^1\bH^2 \to \bquotient{\Gamma}{T^1\bH^2}$ is its derivative, see details in \cite[Chapter E]{Benedetti1992}.	

The space $\framedH2$ is the 2-dimensional instance of the space $\framedH{n}$ of framed hyperbolic $n$-manifolds, in which context the topology is known as the \textit{geometric topology}. Perhaps most crucially, $\framedH{n}$ is the setting in the classical works of J{\o}rgensen and Thurston to study the volume spectrum of hyperbolic $n$-manifolds \cite[Chapters 5 \& 6]{Thurston2022}. For $n=2$, the range of the volume function for hyperbolic surfaces is just $2\pi \mathbb{N} \cup \{ \infty\}$ by the Gauss-Bonnet Theorem. 

There have already been some results regarding the topology of $\framedH2$ itself or a related space. With respect to the Chabauty topology, Baik and Clavier proved in \cite{Baik2013} that the closure of the space of one-generator subgroups of $G$ is simply connected. In \cite{Biringer2021}, Biringer, Lazarovich, and Leitner showed that the closure of the subspace $\Sub_S(G)$ of lattices $\Gamma < G$ for which $\bquotient{\Gamma}{\bH^2} \approx S$ is simply-connected when $S$ is a finite-type hyperbolic 2-orbifold with sufficiently large complexity. The main result of this paper is the following:

\begin{theorem}\label{globally connected}
$\framedH2$ is path-connected.
\end{theorem}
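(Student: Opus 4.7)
The plan is to fix a convenient basepoint $B_0 \in \framedH2$ and show that every $(X, p, \textbf{v}) \in \framedH2$ is path-connected to $B_0$. I take $B_0$ to be the image under $\Phi$ of the trivial subgroup $\{e\} \le G$, namely $B_0 = (\bH^2, p_{\text{base}}, \textbf{v}_{\text{base}})$. Two building-block paths do all the work: \emph{Fenchel--Nielsen deformations}, which continuously vary the length and twist coordinates of a pants decomposition (including the option of sending a length parameter to $+\infty$), and \emph{pinching paths}, which shrink the length of a simple closed geodesic to $0$ and produce a cusp pair in the Chabauty limit, keeping only the basepoint's connected component.

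For finite-type $(X, p, \textbf{v})$ with $X \neq \Sigma_{0,3}$, fix a geodesic pants decomposition containing $p$ in a distinguished pants $P_0$ with a chosen boundary curve $\gamma_\ast$. Concatenate pinching paths shrinking every interior curve of the decomposition other than $\gamma_\ast$; the Chabauty limit is a vectored $\Sigma_{0,4}$ or $\Sigma_{1,1}$, each admitting a Fenchel--Nielsen length coordinate along $\gamma_\ast$. A Fenchel--Nielsen path then sends $\ell(\gamma_\ast) \to +\infty$, making the injectivity radius at a suitably tracked $p$ diverge, so that every non-identity element of the associated Fuchsian group eventually has arbitrarily large translation length at $p_{\text{base}}$, with Chabauty limit $\{e\} = B_0$. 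The residual case $X = \Sigma_{0,3}$ is handled by reversing a pinching path: $\Sigma_{0,3}$ is the Chabauty limit as the unique Fenchel--Nielsen curve of a $\Sigma_{0,4}$ shrinks to $0$, so the reverse path connects $\Sigma_{0,3}$ to a vectored $\Sigma_{0,4}$, which reaches $B_0$ by the preceding argument.

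For an infinite-type $(X, p, \textbf{v})$, fix a locally finite geodesic pants decomposition with $p \in P_0$ and choose a compact subsurface $K$ of $X$ containing $P_0$ together with at least one additional pants from the decomposition. Let $\gamma^{(1)}, \ldots, \gamma^{(m)}$ be the boundary curves of $K$. A simultaneous pinching path on $t \in [0,1]$ setting $\ell(\gamma^{(i)})(t) = (1-t)\ell(\gamma^{(i)})(0)$ terminates at a Chabauty limit equal to the finite-type hyperbolic surface obtained by attaching cusps to $K$ along each $\gamma^{(i)}$. The finite-type argument then reaches $B_0$ (using the exceptional case if the resulting surface happens to be $\Sigma_{0,3}$, which one can always avoid by enlarging $K$).

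The principal technical obstacle is continuity of the concatenated paths in the Gromov--Hausdorff topology, particularly the simultaneous pinching step in the infinite-type case. One must verify that for each $R > 0$, the metric ball $B(p_t, R) \cap X_t$ varies continuously in $t$; this follows from continuous dependence of Fuchsian groups on their Fenchel--Nielsen parameters, together with the collar lemma, which ensures that well-separated pinches do not interact within any fixed ball around $p$. A secondary technicality is continuous tracking of the basepoint within $P_0$ away from the collars being pinched, handled by choosing $p$ uniformly in the interior of $P_0$ throughout the deformation.
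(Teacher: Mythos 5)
There is a genuine gap in the final step of your finite-type argument. After pinching all interior curves except $\gamma_\ast$, your intermediate surface ($\Sigma_{0,4}$ or $\Sigma_{1,1}$, with all ends cusps) has finite area, and by Gauss--Bonnet this area is fixed at $2\pi|\chi|$ regardless of the Fenchel--Nielsen coordinates. Since a disk of radius $\inj_X(p)$ embeds isometrically in $X$, one has $2\pi(\cosh(\inj_X(p)) - 1) \le \operatorname{Area}(X)$, so the injectivity radius at every point of a finite-area surface is bounded above by $\cosh^{-1}(|\chi|+1)$, no matter how you vary $\ell(\gamma_\ast)$ or track $p$. Sending $\ell(\gamma_\ast)\to\infty$ therefore cannot make the injectivity radius diverge, and the Chabauty limit of such a path is not the trivial group; the path does not reach $B_0 = (\bH^2, p_{\text{base}}, \textbf{v}_{\text{base}})$. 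This is the step where the paper does something essentially different: after landing on the thrice-punctured sphere $Y_0$, it compares $Y_0$ to a geodesic pair of pants $Y_1$ with boundary lengths $1$ completed by \emph{funnels} to a surface $\widetilde{Y}_1$ of infinite area, pinching the three boundary curves of $Y_1$ to relate $\widetilde{Y}_1$ to $Y_0$ on one hand, and on the other hand moving the base vector out along a funnel end of $\widetilde{Y}_1$, where the injectivity radius genuinely diverges, to relate $\widetilde{Y}_1$ to $\bH^2$ (using the analogue of your injectivity-radius criterion, which appears as Lemma~\ref{converge-H2}). The rest of your strategy — reducing an arbitrary surface to a small finite-type one by pinching a finite collection of simple closed geodesics, with the basepoint tracked in a fixed pants — is close in spirit to the paper's use of Lemma~\ref{pinch}, though the paper shrinks down to a single thrice-punctured sphere rather than to $\Sigma_{0,4}$, which sidesteps your need to special-case $\Sigma_{0,3}$. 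You should also be aware that the continuity of the pinching paths, which you dispatch with a sentence about continuous dependence and the collar lemma, is the main technical content of the paper's Lemma~\ref{pinch} and relies on an explicit quasiconformal model from the appendix; it deserves more than a gesture.
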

In the proof of Theorem \ref{globally connected}, we construct a path from any vectored hyperbolic surface $(X, p, \textbf{v})$ to $(\bH^2, p_{base}, \textbf{v}_{base})$ by connecting both to a vectored thrice punctured sphere. The subtlety of  choosing the base vector rather than just a basepoint along a path is handled in Section \ref{paths}. Our proof strategy also allows us to derive path-connectivity for certain subspaces of $\framedH2$, which we discuss in Section \ref{proof}.

	\section{Preliminaries}

	\subsection{Geodesic pants decomposition}	
	A {\it hyperbolic surface} is a surface equipped with a Riemannian metric of constant negative curvature $-1$ (called a \textit{hyperbolic metric}). A hyperbolic surface $X$ is {\it complete} if it is complete as a metric space, and \textit{geodesically complete} if it is complete and has no boundary. The \textit{convex core} of $X$, denoted by $\CC(X)$, is the smallest closed convex subsurface of $X$ with (possibly empty) geodesic boundary that is homotopy equivalent to $X$. We say that $X$ is of \textit{finite topological type} if $\pi_1(X)$ is finitely generated; it is of \textit{infinite topological type} otherwise. 
	
	By a \textit{generalized pair of pants}, we mean a complete hyperbolic surface diffeomorphic to a sphere with three open disks or points removed. We require that it includes each of its boundary components that is a simple closed geodesic. Below, we record a version of a geodesic pants decomposition that also addresses hyperbolic surfaces of infinite topological type. A proof can be found in \cite{Alvarez2004}.
	
	\begin{theorem}[Geodesic pants decomposition]\label{geo-pants}
	Let $X$ be a complete hyperbolic surface whose $\pi_1(X)$ is non-abelian  and whose boundary is a (possibly empty) union of disjoint simple closed geodesics. Let $\mathcal{L}$ be the set of boundary components of the convex core $\CC(X)$ which are open infinite geodesics (if there are any). 
	
	Then, there is a collection $\mathcal{P}$ of disjoint simple closed geodesics  in $\CC(X)$ so that the closure of each component of $\CC(X) \setminus (\mathcal{L} \cup \mathcal{P})$ is a generalized pair of pants. We call $\mathcal{P}$ a geodesic pants decomposition of $X$. For our purpose, we will  include in $\mathcal{P}$ the curves representing the cusps in $X$ and the boundary components of $\CC(X)$ which are simple closed geodesics 
	\end{theorem}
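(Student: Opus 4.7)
The plan is to reduce the infinite-type case to the classical finite-type geodesic pants decomposition via a nested exhaustion of $\CC(X)$ by compact hyperbolic subsurfaces with geodesic boundary. First I would handle the finite-type case: when $\pi_1(X)$ is finitely generated, $\CC(X)$ is compact modulo cusps and modulo the components of $\mathcal{L}$, and I would induct on $-\chi(\CC(X))$. If $\CC(X)$ is already a generalized pair of pants there is nothing to do; otherwise the classification of surfaces produces an essential simple closed curve $c$ in $\CC(X)$ that is not peripheral. Since the surface is hyperbolic and $\pi_1$ is non-abelian, $c$ is freely homotopic to a unique simple closed geodesic $\gamma$. Cutting along $\gamma$ yields one or two complete hyperbolic subsurfaces of strictly smaller $|\chi|$, each still satisfying the hypotheses, to which the inductive hypothesis applies.

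For the infinite-type case, I would choose a topological exhaustion $\CC(X) = \bigcup_n Z_n$ by compact subsurfaces with $Z_n \subset \mathrm{int}(Z_{n+1})$. The boundary curves of $Z_n$ that are essential and non-peripheral in $X$ can be homotoped simultaneously to disjoint simple closed geodesics: the bigon criterion applied to their lifts in $\bH^2$ shows that the geodesic representatives of a pairwise disjoint collection of simple closed curves are pairwise disjoint and simple. Replacing $Z_n$ with the compact subsurface $Y_n \subset \CC(X)$ bounded by these geodesics (together with the relevant parts of $\partial \CC(X)$ and $\mathcal{L}$), I obtain a nested exhaustion $Y_1 \subset Y_2 \subset \cdots$ of $\CC(X)$ by finite-type hyperbolic surfaces with geodesic boundary. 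A slight enlargement ensures $\partial Y_n \subset \mathrm{int}(Y_{n+1})$, so that the boundary geodesics of $Y_n$ are available as pants curves in later stages.

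Now I would construct $\mathcal{P}$ stage by stage. Apply the finite-type step to $Y_1$ to obtain a finite collection $\mathcal{P}_1$ of disjoint simple closed geodesics giving a pants decomposition of $Y_1$. Assuming $\mathcal{P}_n$ has been built so that $\partial Y_n \subseteq \mathcal{P}_n$, apply the finite-type step to each component of the closure of $Y_{n+1} \setminus Y_n$ (these are compact hyperbolic subsurfaces with geodesic boundary), and adjoin the resulting geodesics to $\mathcal{P}_n$ to produce $\mathcal{P}_{n+1}$. The union $\mathcal{P} := \bigcup_n \mathcal{P}_n$ is locally finite, because any compact subset of $\CC(X)$ is contained in some $Y_n$, on which the decomposition is finite; and by construction every component of $\CC(X) \setminus (\mathcal{L} \cup \mathcal{P})$ sits inside some $Y_n$ and is a generalized pair of pants.

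The main obstacle is the exhaustion step: one must arrange the $Y_n$ so that their boundaries are pairwise disjoint simple closed geodesics, do not collide with $\mathcal{L}$ or with each other, and nest properly. The essential input is the simultaneous disjoint geodesic realization of disjoint essential simple closed curves, together with careful bookkeeping for components of $Y_{n+1} \setminus Y_n$ that become annular or peripheral after passing to geodesic representatives; such components contribute no new curves to $\mathcal{P}$ but must be absorbed into the adjacent pants. Once this is in place, the assembly is a straightforward induction on $n$.
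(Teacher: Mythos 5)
The paper does not give its own proof of Theorem~\ref{geo-pants}; it refers the reader to \cite{Alvarez2004}, so there is no in-paper argument to compare against. Your exhaustion-plus-geodesic-realization strategy is a standard and plausible route to the statement and is likely in the spirit of the cited reference. That said, two points deserve attention.

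First, a minor but genuine error: the induction in the finite-type step should not be on $-\chi(\CC(X))$. Cutting along a non-separating essential simple closed curve leaves the Euler characteristic unchanged (for example, a once-punctured torus and a pair of pants both have $\chi = -1$), so $|\chi|$ need not strictly decrease under a cut. The standard quantity is the pants complexity $\xi = 3g - 3 + b + n$ (equivalently, the number of pants curves required), which does strictly drop when you cut along a non-peripheral essential curve; with this replacement the induction goes through.

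Second, and more substantially, the proposal does not actually engage with $\mathcal{L}$, the set of open infinite geodesics in $\partial\CC(X)$. For finitely generated Fuchsian groups $\mathcal{L}$ is always empty, so it is invisible in your finite-type step; but in the infinite-type case $\mathcal{L}$ is precisely where the statement is delicate. A component of $\CC(X)\setminus(\mathcal{L}\cup\mathcal{P})$ that abuts a component of $\mathcal{L}$ has part of an open infinite geodesic in its frontier, and it is not evident from your sketch why its closure is a generalized pair of pants, nor how the exhaustion $Y_n$ is to be positioned relative to such boundary arcs (your boundary curves are all simple closed geodesics, but $\mathcal{L}$ consists of open ones that cannot be approximated by simple closed curves in the obvious way). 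The phrase ``together with the relevant parts of $\partial\CC(X)$ and $\mathcal{L}$'' is exactly where the infinite-type content lies, and at present it is asserted rather than established. If one restricts to the case $\mathcal{L}=\emptyset$, the remainder of the outline, with the complexity fix, reads as a correct sketch.
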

	
In the case that $X$ is a complete hyperbolic surface with no boundary and $\pi_1(X)$ is abelian, then either $\pi_1(X)$ is trivial or $\pi_1(X) \cong \bZ$. That is, $X$ is isometric to one of the following surfaces:
\begin{enumerate}[label = \arabic*), itemsep=0.5ex]
	\item the hyperbolic plane: $\bH^2 = \{ z \in \mathbb{C}: \text{Im} z > 0\}$,
	\item a \textit{one-cusp cylinder}: $ \langle z \mapsto z + c \rangle \backslash\bH^2$, for some $c\in \R$, or
	\item an \textit{infinite funnel}: $ \langle z \mapsto e^{\ell}z \rangle \backslash \bH^2$ for some $\ell > 0$.
\end{enumerate}

	\subsection{The topology on $\framedH2$}\label{define-H2}
	In this section, we define the space $\framedH2$ and the Gromov-Hausdorff topology. We follow the conventions set in \cite[Chapter I.3]{Canary2006}. All hyperbolic surfaces in this subsection are connected, oriented, and geodesically complete.

	\begin{definition}
	A {\it vectored hyperbolic surface} is a triple $(X,p,\textbf{v})$ of a hyperbolic surface $X$, a point $p \in X$ (called a \textit{basepoint}), and a unit vector $\textbf{v} \in T^1_p(X)$ (called a \textit{base vector}).	We usually write the choice of base vector as $(p,\textbf{v}) \in T^1X$.
	\end{definition}

	Let $\framedH2$ be the set of vectored hyperbolic surfaces, considered up to vector-preserving isometries:
	\[ \framedH2 = \{ \text{vectored oriented hyperbolic surfaces }  (X,p,\textbf{v}) \}/\sim, \]
	where $(X,p,\textbf{v}) \sim (X',p', \textbf{v}')$ if there is an orientation-preserving isometry $f: X \to X'$ such that $df(p,\textbf{v}) =(p',\textbf{v}')$. Here, $df$ denotes the differential of $f$. 
	
	\begin{notation}
	In $\framedH2$, we often suppress the distinction between a vectored surface and the isometry class that it represents unless ambiguities arise.
	\end{notation}

\begin{definition}
	For $\epsilon > 0$ and $R >0$, two vectored hyperbolic surfaces $(X, p, {\bf v})$ and $(Y, q, {\bf w})$ in $\framedH2$ are {\it $(\epsilon,R)$-related} if there are compact subsurfaces $X_1 \subset X$ and $Y_1 \subset Y$, where $B_X(p, R) \subset X_1$ and $B_Y(q, R) \subset Y_1$, together with a relation $\mathfrak{R} \subset X_1 \times Y_1$  satisfying the following conditions 
	\begin{enumerate}[label = \arabic*), itemsep=0.5ex]
		\item $(p, q) \in \mathfrak{R}$;
		\item for every $x \in X_1$, there is a point $y \in Y_1$ such that $(x,y) \in\mathfrak{R}$;
		\item for every $y \in Y_1$, there is a point $x \in X_1$ such that $(x,y) \in\mathfrak{R}$;
		\item if $(x_1, y_1), (x_2,y_2) \in \mathfrak{R}$, then  $|d_Y(y_1,y_2) - d_{X}(x_1, x_2)| < \epsilon$;
		\item if $\textbf{v}' \in T^1_pX$ and $\textbf{w}' \in T^1_qY$ are obtained by rotating $\textbf{v}$ and $\textbf{w}$ counterclockwise by $\pi/2$, then 
		\[ r_1^2+r_2^2 \leq R^2 \Longrightarrow (\exp(r_1 \textbf{v} + r_2\textbf{v}'),  \exp(r_1 \textbf{w} + r_2\textbf{w}')) \in \mathfrak{R}, \]
 where $\exp$ is the exponential map.
	\end{enumerate}
\end{definition}

	We are now ready to define the topology on $\framedH2$.
	\begin{definition}\label{defn: GH}
	The \textit{Gromov-Hausdorff topology} on $\framedH2$ is generated by neighborhoods of the form
	\begin{equation*}
		\mathfrak{N}(X, p, \textbf{v}, \epsilon, R) = \{(Y,q, \textbf{w}) \in \framedH2: (X,p, \textbf{v}) \text{ and } (Y,q, \textbf{w}) \text{ are } (\epsilon,R)\text{-related}\}
	\end{equation*}
	where $(X,p,v)$ ranges over vectored hyperbolic surfaces in $\framedH2$ and $\epsilon, R >0$.
	\end{definition}
	
	When equipped with this topology, $\framedH2$ is Hausdorff,  see \cite[Chapter I.3]{Canary2006}. Thus, there is a well-defined notion of convergence. Indeed, a sequence $\{(X_i, p_i, \textbf{v}_i)\}_{i=1}^\infty$ converges to $(X_{\infty}, p_{\infty}, \textbf{v}_{\infty})$ in $\framedH2$ if and only if there exist sequences $\epsilon_i \to 0$ and $R_i \to \infty$ such that $(X_i, p_i, \textbf{v}_i)$ and $(X_{\infty}, p_{\infty}, \textbf{v}_{\infty})$ are $(\epsilon_i, R_i)$-related.

	\subsubsection{The Chabauty topology and convergence in $\framedH2$} \label{sec: chab}
We will make use of equivalent conditions for convergence in $\framedH2$. These criteria are established via the Chabauty topology, which we briefly recall below. 
	
	\begin{definition}\label{chabauty}\label{defn: Chabauty}
	Let $G$ be a Lie group and let $\Sub(G)$ be the set of closed subgroups of $G$.  We say that a sequence $\{H_i\}_{i=1}^\infty$ in $\Sub(G)$ \textit{Chabauty converges to} $H \in \Sub(G)$ if and only if
	\begin{itemize}[itemsep=0.5ex]
		\item each $h \in H$ is the limit of some sequence $\{h_i\}_{i=1}^\infty$, where $h_i \in H_i$ for every $i$, and
		\item if $\{h_{i_n}\}_{n = 1}^\infty \subset \bigcup\limits_{i=1}^\infty H_i$  is a convergent sequence with $h_{i_n} \in H_{i_n}$ and $h_{i_n} \to h$ in $G$, then $h \in H$.
	\end{itemize}
	\end{definition}

The Chabauty topology as characterized by this convergence makes $\Sub(G)$ a compact metrizable space. An explicit metric is described in \cite{Biringer2017}, for instance. The Chabauty topology gives interesting compactifications for many objects in geometry and group theory via the theory of invariant random subgroups; the interested reader may find references in  \cite{Gelander2015} useful.
	 
	 Now taking $G = \PSL_2(\R)$, the group of orientation-preserving isometries of $\bH^2$, we let $\Sub_{DT}(G) \subset \Sub(G)$ be the subspace of discrete torsion-free subgroups of $G$. With a fixed $(p_{base}, \textbf{v}_{base}) \in T^1\bH^2$, we have a well-defined homeomorphism 
	\begin{equation}\label{chabauty-corr}
		\begin{split}
			\Phi: \Sub_{DT}(G) &\to \framedH2 \\
			\Gamma &\mapsto (\bquotient{\Gamma}{\bH^2}, \pi_\Gamma(p_{base}), d\pi_\Gamma(\textbf{v}_{base})),
		\end{split}
	\end{equation}
	where $\pi_\Gamma: \bH^n \to \bquotient{\Gamma}{\bH^2}$ is the quotient map, see details in \cite{Benedetti1992}.  In fact, the map $\Phi$ in (\ref{chabauty-corr})  naturally extends to a homeomorphism between the Chabauty space of discrete subgroups $\Sub(G)$ and the space of vectored hyperbolic 2-orbifolds, as in  \cite{Biringer2021}. We will focus only on hyperbolic surfaces in this paper. 
	
	Besides $G= \PSL_2(\R) $, the topology of the Chabauty spaces of closed subgroups for some other groups have been either fully or partially studied, including $\R^n$ \cite{Kloeckner2009}, the 3-dimensional Heisenberg group \cite{Bridson2009}, locally compact abelian groups \cite{Cornulier2011}, and $\text{SL}_3(\R)$ \cite{Leitner2016, Lazarovich2021}. 
	
	To summarize, we list some equivalent convergence criteria in $\framedH2$. These are well-known, and we give a reference for where each item's equivalence with the Chabauty convergence is proved accordingly.
	
	\begin{proposition}[Convergence criteria in $\framedH2$]\label{conv-crit}
	Let $\{(X_i,p_i,\textbf{v}_i)\}_{i=1}^\infty$ be a sequence of vectored hyperbolic surfaces in $\framedH2$ and $(X_\infty, p_\infty, \textbf{v}_\infty) \in \framedH2$.  Then, all of the following criteria are equivalent to the convergence $(X_i, p_i, \textbf{v}_i) \to (X_{\infty}, p_{\infty}, \textbf{v}_\infty)$ in $\framedH2$:
	\begin{enumerate}
		
		\item (via the Chabauty topology) If $\Phi$ is the map from (\ref{chabauty-corr}) above, then $\Phi^{-1}(X_i, p_i, \textbf{v}_i)$ Chabauty converges to $\Phi^{-1}(X_\infty, p_\infty, \textbf{v}_\infty)$ as subgroups of $\PSL_2(\R)$.
				
		\item (via $(\epsilon,R)$-relations, \cite[Theorem I.3.2.9]{Canary2006}) There are sequences $\epsilon_i \to 0$ and $R_i \to \infty$  such that $(X_i, p_i, \textbf{v}_i)$ and $(X_\infty, p_\infty, \textbf{v}_{\infty})$ are $(\epsilon_i, R_i)$-related;

		\item (via quasi-conformal maps, \cite[Proposition 3.4.6]{Biringer2021})  There are sequences $K_i \to 1$, $R_i,R_i' \to \infty$, and $K_i$-quasiconformal embeddings 
		\[ \psi_i: B_{X_\infty}(p_\infty, R_i) \to X_i \]
		such that $B_{X_i}(p_{i},R_i') \subset \psi_i(B_{X_\infty}(p_\infty,R_i))$, $\psi_i$ is conformal in a $\delta$-neighborhood of $p_\infty$ for some small $\delta > 0$ with  $\psi_i^{-1}(p_i) \to p_{\infty}$ and $d\psi_i^{-1}(\textbf{v}_i) \to \textbf{v}_{\infty}$;
		\item (via the smooth topology, \cite[Proposition 3.4.3]{Biringer2021}) There are sequences $R_i \to \infty$ and smooth orientation-preserving embeddings
		\[ \psi_i: B_{X_\infty}(p_{\infty},R_i) \to X_i \]
		such that $\psi_i^{-1}(p_i) \to p_\infty$,  $df_i^{-1}(\textbf{v}_i) \to \textbf{v}_\infty$, and $\psi_i^{\ast}(g_{X_i}) \to g_{X_{\infty}}$ in the $C^{\infty}$ topology, where $g_{X_i}$ and $g_{X_{\infty}}$ are the hyperbolic metrics on $X_i$ and $X_{\infty}$, respectively;

	\end{enumerate}
	\end{proposition}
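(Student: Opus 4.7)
The plan is to treat the Chabauty convergence (criterion (1)) as the organizing center and to show each of (2), (3), (4) is equivalent to it. Equivalence of (2) with convergence in $\framedH2$ is essentially immediate from Definition \ref{defn: GH} via a diagonal argument across the basis $\{\mathfrak{N}(X_\infty, p_\infty, \textbf{v}_\infty, \epsilon, R)\}$. The homeomorphism $\Phi$ in (\ref{chabauty-corr}) reduces (1) $\Leftrightarrow$ (2) to verifying that $\Phi$ is a homeomorphism, which itself decomposes into: given Chabauty convergence $\Gamma_i \to \Gamma_\infty$, construct $(\epsilon_i, R_i)$-relations on the quotients by fixing a compact fundamental set for $\Gamma_\infty$ in a large ball of $\bH^2$, and using approximating elements of $\Gamma_i$ to identify matched points on the quotients; conversely, given such relations, recover approximating elements of $\Gamma_i$ by lifting closed loops based at the basepoint across the two quotients and reading off deck transformations.

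For (3) and (4), I would prove the cycle (4) $\Rightarrow$ (3) $\Rightarrow$ (2) $\Rightarrow$ (4). The first implication is automatic because smooth orientation-preserving embeddings whose pulled-back metric converges to $g_{X_\infty}$ in $C^\infty$ have conformal distortion tending to one, so they are $K_i$-quasiconformal with $K_i \to 1$. The second uses standard quasiconformal distortion estimates: a $K_i$-quasiconformal embedding pairs each point with its image to yield an $(\epsilon_i, R_i)$-relation once $K_i \to 1$ and the domains exhaust $X_\infty$. For the last, one lifts the $(\epsilon_i, R_i)$-relations to $\bH^2$, employs the thick-thin decomposition of $B_{X_\infty}(p_\infty, R_i)$ to transport a geodesic pants decomposition onto $X_i$ via a near-isometry, and smoothes the pieced-together map with a partition of unity that respects the explicit local models on cusp neighborhoods and short-geodesic collars.

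The principal technical obstacle is propagating the base-vector condition coherently through all four formulations. In (2) it is encoded by condition (5) of the $(\epsilon, R)$-relation through the synchronized exponential maps; in (3) by the conformality of $\psi_i$ on a small disk at $p_\infty$ together with $d\psi_i^{-1}(\textbf{v}_i) \to \textbf{v}_\infty$; in (4) by the $C^\infty$ convergence together with $d\psi_i^{-1}(\textbf{v}_i) \to \textbf{v}_\infty$; and in (1) automatically from the choice of $(p_{\text{base}}, \textbf{v}_{\text{base}})$ defining $\Phi$. Matching framings is more delicate than matching basepoints because an arbitrary near-isometry or $K$-quasiconformal map may rotate or shear the base vector. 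I would handle this by upgrading each approximating map on a small disk at $p_\infty$ to a genuine isometry via lifts to $\bH^2$ and post-composition with an element of $\PSL_2(\R)$ close to the identity, which is straightforward in the thick part but requires separate bookkeeping when $p_\infty$ lies deep inside a cusp or a Margulis tube, where one must choose the lift consistently with the local horocyclic or hypercyclic foliation.
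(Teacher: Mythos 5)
The paper does not actually prove this proposition; it presents the criteria as well-known facts and cites \cite{Canary2006} for the equivalence with (2) and \cite{Biringer2021} for (3) and (4). So there is no in-paper argument to compare against; I can only assess your sketch on its own merits. Your high-level organization (treating Chabauty convergence as the hub, the cycle (4) $\Rightarrow$ (3) $\Rightarrow$ (2) $\Rightarrow$ (4), and the observation that base-vector bookkeeping is the principal subtlety) is sound and consistent with how the cited sources proceed.

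Two specific points deserve tightening. For (3) $\Rightarrow$ (2), ``standard quasiconformal distortion estimates'' is not the right mechanism: a $K$-quasiconformal self-map of $\bH^2$ is \emph{not} automatically close to an isometry in any uniform sense just because $K$ is close to $1$. What one actually uses is compactness (normal families) for normalized $K_i$-qc maps: lift $\psi_i$ to $\bH^2$, use conformality near $p_\infty$ together with the condition $d\psi_i^{-1}(\textbf{v}_i) \to \textbf{v}_\infty$ to pin the normalization, conclude that the lifts converge locally uniformly to the identity Möbius transformation, and read off the $(\epsilon_i, R_i')$-relations from that convergence. For (2) $\Rightarrow$ (4), the route through thick-thin decompositions and transported pants decompositions is considerably heavier and more error-prone than what the cited references do. The standard argument passes first to Chabauty convergence of the deck groups $\Gamma_i \to \Gamma_\infty$, fixes a (truncated) fundamental polygon for $\Gamma_\infty$ covering a large ball, replaces each finite collection of side-pairing isometries by nearby elements of $\Gamma_i$, and projects; this produces the desired smooth embeddings directly and sidesteps the delicate problem of matching pants curves, collars, and cusp regions across a changing decomposition, which in your scheme would need extra control when short geodesics degenerate.
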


	\section{Some paths in $\framedH2$} \label{paths}

In this section, we produce continuous paths in $\framedH2$ by  (1) moving the base vector, (2) changing the lengths and twists of a collection of curves in the Fenchel-Nielsen coordinates, and (3) shrinking simple closed geodesics to cusps. While the statements may seem rather intuitive, we will need to be  careful about where to assign base vectors.

Unless stated otherwise, a hyperbolic surface is assumed to be connected, oriented, and complete without boundary. We fix once and for all a base vector $(p_{base},\textbf{v}_{base})\in T^1 \bH^2$. 
	
	\subsection{Moving the base vector}
	
	\begin{lemma}\label{move-basepoint}
	Let $X$ be a hyperbolic surface. Then, the map $T^1X \to \framedH2$ defined by $(p,\textbf{v}) \mapsto [(X, p,\textbf{v})]$  is continuous.
	\end{lemma}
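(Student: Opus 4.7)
The plan is to verify sequential continuity (which suffices since $T^1X$ and $\framedH2$ are metrizable) using the Chabauty criterion of Proposition~\ref{conv-crit}(1). Suppose $(p_n,\textbf{v}_n)\to(p,\textbf{v})$ in $T^1X$, and let $\Gamma_0=\Phi^{-1}(X,p,\textbf{v})\in\Sub_{DT}(G)$, so that under the identification $X=\bquotient{\Gamma_0}{\bH^2}$ we have $\pi_{\Gamma_0}(p_{base})=p$ and $d\pi_{\Gamma_0}(\textbf{v}_{base})=\textbf{v}$. The goal is to produce subgroups $\Gamma_n\in\Sub_{DT}(G)$ with $\Phi(\Gamma_n)=(X,p_n,\textbf{v}_n)$ that Chabauty converge to $\Gamma_0$.

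First I would lift the sequence $(p_n,\textbf{v}_n)$ to vectors $(\tilde p_n,\tilde{\textbf{v}}_n)\in T^1\bH^2$ with $(\tilde p_n,\tilde{\textbf{v}}_n)\to(p_{base},\textbf{v}_{base})$; such lifts exist for large $n$ since $d\pi_{\Gamma_0}:T^1\bH^2\to T^1X$ is a local diffeomorphism. Because $\PSL_2(\R)$ acts simply transitively on $T^1\bH^2$, there are unique elements $g_n\in G$ with $g_n\cdot(p_{base},\textbf{v}_{base})=(\tilde p_n,\tilde{\textbf{v}}_n)$, and $g_n\to \mathrm{id}$ in $G$. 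Set $\Gamma_n:=g_n^{-1}\Gamma_0 g_n$. The map $z\mapsto g_n z$ on $\bH^2$ descends to an isometry $\bquotient{\Gamma_n}{\bH^2}\to\bquotient{\Gamma_0}{\bH^2}=X$ which sends $\pi_{\Gamma_n}(p_{base})\mapsto\pi_{\Gamma_0}(\tilde p_n)=p_n$, and analogously for the base vector, so $\Phi(\Gamma_n)=(X,p_n,\textbf{v}_n)$.

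It remains to show $\Gamma_n\to\Gamma_0$ in the Chabauty topology. For every $\gamma\in\Gamma_0$, the sequence $g_n^{-1}\gamma g_n\in\Gamma_n$ converges to $\gamma$, handling the first condition in Definition~\ref{defn: Chabauty}. For the second, suppose $\gamma_{n_k}\in\Gamma_{n_k}$ is a convergent sequence with $\gamma_{n_k}\to h\in G$; writing $\gamma_{n_k}=g_{n_k}^{-1}\delta_{n_k}g_{n_k}$ with $\delta_{n_k}\in\Gamma_0$, we get $\delta_{n_k}=g_{n_k}\gamma_{n_k}g_{n_k}^{-1}\to h$, and discreteness (hence closedness in $G$) of $\Gamma_0$ forces $h\in\Gamma_0$.

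The main point to be careful about, rather than a genuine obstacle, is confirming that the conjugated subgroup $\Gamma_n$ really corresponds to $(X,p_n,\textbf{v}_n)$ under the map $\Phi$: this amounts to equivariance bookkeeping for the covering map $\pi_{\Gamma_0}$ and its derivative. Once this identification is secured, the Chabauty convergence follows automatically from $g_n\to\mathrm{id}$, and the lemma is proved.
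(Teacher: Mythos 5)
Your proof is correct and follows essentially the same route as the paper: realize $(X,p_n,\textbf{v}_n)$ as $\Phi$ of a conjugate $g_n^{-1}\Gamma_0 g_n$ with $g_n\to\mathrm{id}$, then invoke continuity of conjugation in the Chabauty topology. The only cosmetic differences are that the paper selects the lifts $(\tilde p_n,\tilde{\textbf{v}}_n)$ via a fundamental domain rather than the local-diffeomorphism argument, and you spell out the two Chabauty conditions which the paper leaves as ``conjugation is continuous.''
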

	\begin{proof}
	Let $(p_n, \textbf{v}_n) \to (p, \textbf{v})$ be a convergent sequence in $T^1X$. Let $\Gamma < \PSL_2(\R)$ be the subgroup such that $\bquotient{\Gamma}{\bH^2} \approx X$ and $(p_{base}, \textbf{v}_{base}) \in T^1\bH^2$ projects to $(p,\textbf{v}) \in T^1X$. Choose a fundamental domain $D \subset T^1\bH^2$ for the action of $\Gamma$ such that $(p_{base}, \textbf{v}_{base})\in int(D)$. For each $n \in \mathbb{N}$, let $(\widetilde{p}_n,\widetilde{\textbf{v}}_n) \in D$ be the lift of $(p_n, \textbf{v}_n)$. Let $g_n \in \PSL_2(\R)$ be the element taking  $(\widetilde{p}_n,\widetilde{\textbf{v}}_n)$ to $(p_{base},\textbf{v}_{base})$ and set $\Gamma_n = g_n\Gamma g_n^{-1}$. Then, the isometry $T^1\bH^2 \to T^1\bH^2$ given by $(q,\textbf{w}) \mapsto (g_n(q),dg_n(\textbf{w}))$ descends to $T^1(\bquotient{\Gamma}{\bH^2}) \to T^1(\bquotient{\Gamma_n}{\bH^2})$, which yields the equivalence
	\[ (X, p_n, \textbf{v}_n) \sim (\bquotient{\Gamma_n}{\bH^2}, \pi_{\Gamma_n}(p_{base}), d\pi_{\Gamma_n}(\textbf{v}_{base}) ) \]
	in $\framedH2$. Since $g_n\to 1$ and conjugation is continuous, it follows that the subgroups $\Gamma_n$ Chabauty converge to $\Gamma$. This is equivalent to the convergence $(X,p_n,\textbf{v}_n) \to (X,p,\textbf{v})$ in $\framedH2$. 
	\end{proof}

	\subsection{Modifying lengths and twists} \label{modifying-lengths}

Let $S$ be a geodesically complete hyperbolic surface with a non-abelian fundamental group that is not a sphere with three punctures. Fix a geodesic pants decomposition $\mathcal{P}$ as in Theorem \ref{geo-pants}. For each pant component in $S \setminus \mathcal{P}$, the shortest geodesic arcs joining the boundary components are called \textit{seams}. The \textit{Fenchel-Nielsen coordinates} of $S$ measure the length and the displacement of the endpoints of the seams for each curve in $\mathcal{P}$, recorded respectively by $(\ell_S, \tau_S)$, where $\ell_S \in \R_{> 0}^{\mathcal{P}}$ and $\tau_s \in  \R^{\mathcal{P}}$. (Notation: $A^B = \{\text{functions } f: B \to A\}$ for sets $A,B$.) These are defined the same way as in the finite-type case, see \cite{FarbMargalit} for reference, with the usual convention that boundary components of the convex core and curves representing cusps are not assigned twists. In this section, we will show that varying these coordinates of a finite subset of curves in $\mathcal{P}$ is compatible with the topology of $\framedH2$. 

\begin{definition}
Let $S$ be a hyperbolic surface as above. Fix a finite subset $\mathcal{Q}\subset \mathcal{P}$ of simple closed geodesics.  Define
\[ \mathcal{T}(S, \mathcal{P}, \mathcal{Q}) := \left\{ (X, h)  \, \middle\vert 
\begin{small}
	\begin{array}{c} X \text{ is a complete hyperbolic surface, } h:S \to X \text{ is an orientation-preserving} \\ \text{diffeomorphism,}  (\ell_X,\tau_X) \in (\R_{>0}\times \R)^{\mathcal{Q}} \text{ on } \mathcal{Q},  \text{ and }(\ell_X,\tau_X) \equiv (\ell_{S},\tau_{S}) \text{ on } \mathcal{P}\setminus\mathcal{Q} \end{array}
\end{small}
\right\}/\sim. \]
Here, the pair $(X,h)$ is called a \textit{marked hyperbolic surface} with a \textit{marking} $h$ and $(X,h) \sim (X',h')$ if there is an isometry $I: X \to X'$ homotopic to $h' \circ h^{-1}$. The Fenchel-Nielsen coordinates $(\ell_X, \tau_X)$ for $(X,h)$ are calculated using the pants and seams by pushing forward those via $h$.  We also define the following subspace of $\framedH2$:
\[ \mathcal{H}^2(S,\mathcal{P},\mathcal{Q}) :=  \left\{[(X,p,\textbf{v})]\in \framedH2 \, \middle\vert 
\begin{small}
	\begin{array}{c} [(X,h)] \in  \mathcal{T}(S, \mathcal{P}, \mathcal{Q})  \text{ is marked with a marking} \\    h: S \to X \text{ and }(p,\textbf{v}) \in T^1X \end{array}
\end{small}
\right\}. \]
\end{definition}

Since $\mathcal{Q}$ is finite, we may topologize $\mathcal{T}(S,\mathcal{P}, \mathcal{Q})$ by the distance 
\[ d_{FN}((X_1,h_1), (X_2,h_2)) =  \max_{\gamma \in \mathcal{Q}} \left\{\max \left(  \left| \log \frac{\ell_{X_1}(\gamma)}{\ell_{X_2}(\gamma)} \right|, |\ell_{X_1}(\gamma)\tau_{X_1}(\gamma) -\ell_{X_2}(\gamma)\tau_{X_2}(\gamma)  | \right)  \right\}\]
as defined in \cite{Alessandrini2012}. Using standard arguments similar to the finite-type surface case in e.g. \cite{FarbMargalit}, we have that 
\begin{equation}
	\begin{split}
		\Psi:  \mathcal{T}(S,\mathcal{P}, \mathcal{Q}) & \to (\R_{>0}\times \R)^{\mathcal{Q}}  \\
		[(X, h)] &\mapsto \left( \gamma \in \mathcal{Q} \mapsto (\ell_X(\gamma), \tau_X(\gamma)) \right)
	\end{split}
\end{equation}
is a homeomorphism. Indeed, for a pair of functions $\ell: \mathcal{Q} \to \R_{> 0}$ and $\tau: \mathcal{Q} \to \R$, we may choose 
\[ \Psi^{-1}(\ell,\tau) = [(X_{(\ell,\tau)}, h_{(\ell,\tau)})] \in \mathcal{T}(S,\mathcal{P}, \mathcal{Q}) \]
where $h_{(\ell,\tau)}: S \to X_{(\ell,\tau)}$ satisfies
\begin{enumerate}[label = \arabic*), itemsep=0.5ex]
	\item  $h_{(\ell,\tau)}(\gamma)$ is a simple closed geodesic in $X_{(\ell,\tau)}$ for every $\gamma \in \mathcal{P}$, and
	\item after pushing forward $\mathcal{P}$ and the seams, the Fenchel-Nielsen coordinates of $X_{(\ell,\tau)}$ are given by
	\[
	(\ell_{X_{(\ell,\tau)}}(\gamma), \tau_{X_{(\ell,\tau)}}(\gamma)) = 
	\begin{cases}
		\begin{array}{ll}
			(\ell(\gamma), \tau(\gamma)) &\text{if } \gamma \in \mathcal{Q}, \\ 
			(\ell_S(\gamma), \tau_S(\gamma)) &\text{if } \gamma \in \mathcal{P}\!\setminus\!\mathcal{Q}.
		\end{array}
	\end{cases}
	\]
\end{enumerate}
We caution that the two conditions above only define a homeomorphism between the convex cores $h_{(\ell,\tau)}: \CC(S) \to \CC(X_{(\ell,\tau)})$, but one can continuously extend it to the funnel and half-plane components of $S \setminus \CC(S)$ to get $h_{(\ell,\tau)}: S \to X_{(\ell,\tau)}$, see details in the proof of Lemma 5.6.4 in \cite{Biringer2021}.

\begin{proposition}\label{FN-change}
Let $S$ be a hyperbolic surface as above with a pants decomposition $\mathcal{P}$ and a finite subset $\mathcal{Q} \subset \mathcal{P}$. 
Then, there is a continuous surjection
\[ \Pi: \mathcal{T}(S,\mathcal{P}, \mathcal{Q}) \times T^1S \to  \mathcal{H}^2(S,\mathcal{P},\mathcal{Q}).   \]
In particular, $\mathcal{H}^2(S,\mathcal{P},\mathcal{Q}) $ is path-connected.
\end{proposition}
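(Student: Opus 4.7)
The plan is to define $\Pi$ using the explicit representative marked surfaces $(X_{(\ell,\tau)}, h_{(\ell,\tau)})$ furnished by $\Psi^{-1}$ just before the statement, and to establish continuity via the smooth convergence criterion (4) of Proposition \ref{conv-crit}. Specifically, I would set
\[
\Pi\big([(X,h)],(p,\mathbf{v})\big) \;=\; \big(X_{(\ell,\tau)},\, h_{(\ell,\tau)}(p),\, dh_{(\ell,\tau)}(\mathbf{v})\big),
\]
where $(\ell,\tau) = \Psi([(X,h)])$ and $h_{(\ell,\tau)}$ is (a smooth representative of) the distinguished marking. Since $\Psi$ is a homeomorphism, the canonical representative depends only on the class $[(X,h)]$, so $\Pi$ is well-defined. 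Surjectivity is then straightforward: given $[(X,q,\mathbf{w})]$ with some marking $h:S\to X$ putting it in $\mathcal{H}^2(S,\mathcal{P},\mathcal{Q})$, let $(\ell,\tau)=\Psi([(X,h)])$, let $I: X_{(\ell,\tau)} \to X$ be an isometry homotopic to $h \circ h_{(\ell,\tau)}^{-1}$, and set $(p,\mathbf{v}) = dh_{(\ell,\tau)}^{-1} \circ dI^{-1}(q,\mathbf{w}) \in T^1 S$; then $I$ is the isometry witnessing that $\Pi([(X,h)],(p,\mathbf{v}))$ and $(X,q,\mathbf{w})$ are equivalent in $\framedH2$.

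The main work is continuity. Suppose $(\ell_n,\tau_n)\to(\ell,\tau)$ in $(\R_{>0}\times\R)^{\mathcal{Q}}$ and $(p_n,\mathbf{v}_n)\to(p,\mathbf{v})$ in $T^1 S$. I would construct a sequence of smooth orientation-preserving embeddings
\[
\psi_n: B_{X_{(\ell,\tau)}}\!\big(h_{(\ell,\tau)}(p),\, R_n\big)\ \longrightarrow\ X_{(\ell_n,\tau_n)}
\]
with $R_n\to\infty$, verifying that $\psi_n^{\ast} g_{X_{(\ell_n,\tau_n)}} \to g_{X_{(\ell,\tau)}}$ in $C^\infty$ on compact sets and that $\psi_n^{-1}(h_{(\ell_n,\tau_n)}(p_n)) \to h_{(\ell,\tau)}(p)$ with the analogous statement for tangent vectors. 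The crucial simplification is that $\mathcal{Q}$ is finite, so only finitely many pant components of $S\setminus \mathcal{P}$ are deformed: on each such pant, the geometry of the right-angled hexagonal fundamental polygon depends smoothly on the three boundary lengths, giving a smoothly-varying family of pants; along each curve in $\mathcal{Q}$, the twist-gluing depends smoothly on $\tau$ and is implemented via a cutoff function supported in a fixed collar neighborhood. Outside the affected finite region, $\psi_n$ can be taken to be the identity in the obvious identification, since the hyperbolic structure there is independent of $n$.

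The hardest technical point will be verifying the $C^\infty$ convergence of $\psi_n^{\ast} g_{X_{(\ell_n,\tau_n)}}$ across the collar neighborhoods where the twists are realized, but this is a mild extension of the classical finite-type Fenchel-Nielsen continuity, the only novelty being that we allow the fixed structure on $\mathcal{P}\setminus\mathcal{Q}$ to be of infinite topological type, which is harmless since that part is held rigid. Once continuity is in place, path-connectivity of the image is immediate: $\mathcal{T}(S,\mathcal{P},\mathcal{Q}) \cong (\R_{>0}\times \R)^{\mathcal{Q}}$ is convex and hence path-connected, $T^1 S$ is path-connected as an $S^1$-bundle over the connected surface $S$, so the product is path-connected, and the continuous surjection $\Pi$ carries this to $\mathcal{H}^2(S,\mathcal{P},\mathcal{Q})$. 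Alternatively, joint continuity could be broken into two pieces by fixing one argument at a time and invoking Lemma \ref{move-basepoint} for the $T^1 S$-factor, but the direct construction above gives both continuity and path-connectivity in one stroke.
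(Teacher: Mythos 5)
Your approach is correct in outline but takes a genuinely different route from the paper. The paper defines $\Pi$ via the Douady--Earle extension $F_{(\ell,\tau,h)}:S\to X$ of the boundary map of a quasiconformal representative of $h$: this map is \emph{canonical} (it depends only on the homotopy class of $h$, giving $I\circ F_{(\ell,\tau,h)}=F_{(\ell,\tau,h')}$ when $I$ is an isometry homotopic to $h'\circ h^{-1}$), so well-definedness is automatic, and joint continuity in $(\ell,\tau)$ and the basepoint is cited from Douady--Earle's continuity theorem. You instead pin down a single constructed representative $(X_{(\ell,\tau)},h_{(\ell,\tau)})$ built from hexagons, collars, and twist cutoffs, then verify convergence by hand via the smooth topology criterion (4) of Proposition \ref{conv-crit}. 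This is more elementary — it avoids conformal-extension machinery — at the price of having to (i) actually \emph{fix} a continuous choice of $h_{(\ell,\tau)}$ (the conditions in the paper only determine $h_{(\ell,\tau)}$ up to homotopy rel the pants curves, so ``the canonical representative'' is a slight overstatement; one must exhibit the continuous family, which your hexagon/cutoff recipe does) and (ii) carry out the $C^\infty$ convergence across the collar regions, which you correctly identify as the technical crux. Two small caveats: first, $dh_{(\ell,\tau)}$ and $dI^{-1}$ are differentials of diffeomorphisms, not isometries, so they don't send $T^1$ to $T^1$ — you (like the paper, tacitly) need to normalize before landing in $T^1X_{(\ell,\tau)}$. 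Second, your parenthetical alternative — establishing continuity separately in each factor and invoking Lemma \ref{move-basepoint} — would not by itself give joint continuity of $\Pi$, so it's good that you rely on the direct construction; just don't present separate continuity as a genuine shortcut.
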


\begin{proof}
Let $[(X, h)] \in \mathcal{T}(S, \mathcal{P},\mathcal{Q})$. Since $\mathcal{Q}$ is finite, we may choose an orientation-preserving diffeomorphism $h: S \to X$ to be a quasi-conformal map in its homotopy class. The rest of the argument here essentially follows the proof of Proposition 4.1.1 in \cite{Biringer2021}.  We have that the equivariant quasi-conformal lift $\tilde h: \bH^2 \to \bH^2$ of $h$ extends continuously to a quasi-symmetric map $\partial \tilde h: \partial \bH^2 \to \partial\bH^2$ between the boundaries at infinity, see \cite[Theorem 1]{Douady1986}. Moreover, the Douady-Earle extension of $\partial \tilde h$ 
\[ \tilde F_{(\ell,\tau, h)}:  \bH^2 \to \bH^2, \] 
where $(\ell,\tau)$ denotes the Fenchel-Nielsen coordinates of $(X,h)$, is a quasi-conformal map, and it is equivariant with respect to deck transformations. So, it descends to a canonical quasi-conformal map
\[ F_{(\ell,\tau,h)}: S \to X \]
which depends only on the homotopy class of $h$ and continuously on $(\ell,\tau)$.

Define	$\Pi: \mathcal{T}(S,\mathcal{P}, \mathcal{Q}) \times T^1S \to  \mathcal{H}^2(S,\mathcal{P},\mathcal{Q}) $ by
\begin{equation}\label{Douady}
	\Pi([(X,h)],(p,\textbf{v})) = [(X, F_{(\ell,\tau,h)}(p), dF_{(\ell,\tau,h)}(\textbf{v}))].
\end{equation}
If $ (X, h) \sim (X', h')$ in $\mathcal{T}(S,\mathcal{P},\mathcal{Q})$, then there is an isometry $I: X \to X'$ homotopic to $h'\circ h^{-1}$ and so by properties of the Douady-Earle extension
\[ I\circ F_{(\ell,\tau,h)} = F_{(\ell,\tau,I \circ h)}  = F_{(\ell, \tau,h')}. \] Thus, $[(X,  F_{(\ell,\tau,h)}(p), dF_{(\ell,\tau,h)}(\textbf{v}))] = [(X', F_{(\ell,\tau,h')}(p), dF_{(\ell,\tau,h')}(\textbf{v}))]$, and $\Pi$ is well-defined.
Continuity of $\Pi$ follows from continuity of  $\tilde F_{(\ell,\tau,h)}$ with respect to the relevant parameters \cite[Proposition 2]{Douady1986}. It is clear that $\Pi$ is surjective. Finally, since $\mathcal{T}(S, \mathcal{P},\mathcal{Q})$ and $T^1S$ are both path-connected, then so is $\framedH2(S,  \mathcal{P},\mathcal{Q})$.
\end{proof}

	\subsection{Pinching a finite collection of disjoint simple closed geodesics}\label{pinching-simple}	
	We start with a geodesically complete hyperbolic surface $S$ such that $\pi_1(S)$ is non-abelian.	Fix a geodesic pants decomposition $\mathcal{P}$ and a collection of seams. These determine the Fenchel-Nielsen coordinates for $S$. We will choose any finite subset $\mathcal{Q} \subset \mathcal{P}$ and show that shrinking the curves in $\mathcal{Q}$ to cusps can produce a continuous path in $\framedH2$, provided that the base vectors are suitably placed. In fact, we will derive this fact as a consequence of a certain quasiconformal embedding of a geodesic pair of pants into one with cusps explicitly constructed by Buser-Makover-M{\"u}tzel-Silhol in \cite{Buser2014}, which we recall and extend in Appendix \ref{appendix}.
		
	Given a family of functions $\ell_t: \mathcal{Q} \to \R_{>0}$ for $t \in [0,1)$, we construct a family of marked hyperbolic surface $\{(X_t,h_t)\}$, each diffeomorphic to $S$, by changing the length coordinate of $\gamma \in \mathcal{Q}$ such that  $\ell_{X_t}(h_t(\gamma)) = \ell_t(\gamma)$ and holding the other length and twist parameters constant. The construction of a marked hyperbolic surface from prescribed Fenchel-Nielsen coordinates is standard, see \cite[Chapter 6]{Buser1992}. We also assume that all markings take simple closed geodesics in $\mathcal{P}$ to simple closed geodesics.
	
 	Choosing a component $S' \subset S \setminus \mathcal{Q}$, we let $X'$ be the hyperbolic surface resulting from replacing the pairs of pants (or a funnel) in $S' \setminus \mathcal{P}$ incident to $\gamma \in \mathcal{Q}$ by ones in which $\gamma$ is a cusp (or a one-cusp cylinder, respectively) and keeping the other lengths and twists the same. This comes with a natural marking $h': int(S') \to X'$. 
	
	\begin{lemma}\label{pinch}
 With the same setup as above, suppose that $\ell_t: \mathcal{Q} \to \R_{>0}$ is a family of functions continuous with respect to  $t\in [0,1)$  and $\ell_t(\gamma) \to 0$ as $t \to 1$ for every $\gamma \in \mathcal{Q}$. Then, for any choice $(p',\textbf{v}') \in T^1X'$, there are choices of base vectors $(p_t, \textbf{v}_t) \in T^1X_t$ such that $(X_t,p_t,\textbf{v}_t)$ is a continuous path in $\framedH2$ and $(X_t,p_t,\textbf{v}_t) \to (X', p',\textbf{v}')$ as $t\to 1$.
	\end{lemma}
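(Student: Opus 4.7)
Our plan is to construct an explicit continuous family of near-isometric quasi-conformal embeddings $\phi_t : X' \setminus H_t \hookrightarrow X_t$ from the Buser--Makover--M\"utzel--Silhol construction in Appendix \ref{appendix}, then take $(p_t, \textbf{v}_t) = (\phi_t(p'), d\phi_t(\textbf{v}'))$ and conclude via criterion (3) of Proposition \ref{conv-crit}.

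For each $t \in [0,1)$ and each pair of pants $P \subset X'$ incident to a cusp coming from $\mathcal{Q}$, Appendix \ref{appendix} supplies a quasi-conformal embedding $\phi_t^P$ into the corresponding pair of pants $P_t \subset X_t$, where the cusp has become a boundary geodesic of length $\ell_t(\gamma)$. This embedding is an isometry outside a compact collar around the pinching boundary, and its quasi-conformal constant tends to $1$ as $\ell_t(\gamma) \to 0$. We glue these via identity maps on pants not adjacent to $\mathcal{Q}$ and on funnel or half-plane components (isometric in $X'$ and $X_t$ by construction), matched isometrically across each curve in $\mathcal{P} \setminus \mathcal{Q}$ whose Fenchel--Nielsen coordinates are preserved. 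The result is $\phi_t : X' \setminus H_t \hookrightarrow X_t$, where $H_t$ is a union of horoball neighborhoods of the cusps in $\mathcal{Q}$, chosen to depend continuously on $t$ and to shrink to empty as $t \to 1$.

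Given $(p', \textbf{v}') \in T^1 X'$, for $t$ sufficiently close to $1$ the point $p'$ lies in $X' \setminus H_t$ and in the isometric region of $\phi_t$; we set $p_t := \phi_t(p')$ and $\textbf{v}_t := d\phi_t(\textbf{v}')$. As $t \to 1$, the quasi-conformal constant of $\phi_t$ tends to $1$, the image $\phi_t(X' \setminus H_t)$ exhausts arbitrarily large balls about $p_t$ in $X_t$ (the complementary collars of short geodesics escape any fixed ball around $p_t$), and $\phi_t$ is conformal near $p'$; criterion (3) of Proposition \ref{conv-crit} then yields $(X_t, p_t, \textbf{v}_t) \to (X', p', \textbf{v}')$. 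Continuity of this family in $\framedH2$ on some interval $[t_1, 1)$ follows from the continuous dependence of the appendix's construction on $\ell_t$. To extend the family to $[0, 1)$, we fix $(p_S, \textbf{v}_S) \in T^1 S$ and use Proposition \ref{FN-change} to get a continuous path $t \mapsto \Pi([(X_t, h_t)], (p_S, \textbf{v}_S))$ in $\framedH2$ on $[0, t_1]$; at $t = t_1$ the two definitions of the base vector on the fixed surface $X_{t_1}$ are reconciled via Lemma \ref{move-basepoint}, and a reparametrization absorbs this bridging path into the $t$-coordinate.

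The main obstacle is verifying that $\phi_t$ is conformal in some fixed neighborhood of $p'$, which is needed both for $d\phi_t(\textbf{v}')$ to be a unit vector and for criterion (3) to apply cleanly. This is exactly what the appendix is designed to ensure: the non-conformal region of $\phi_t$ is confined to a collar of the pinching curves whose diameter shrinks as $\ell_t(\gamma) \to 0$, so any fixed basepoint $p' \in X'$ eventually lies in the isometric region of $\phi_t$.
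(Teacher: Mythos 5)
Your high-level plan---use the Buser--Makover--M\"utzel--Silhol quasiconformal maps and conclude with criterion (3) of Proposition \ref{conv-crit}---is the same as the paper's, and the bridge at $t = t_1$ via Proposition \ref{FN-change} and Lemma \ref{move-basepoint} matches the paper's opening step. However, there is a genuine gap in the final paragraph: you assert that ``the non-conformal region of $\phi_t$ is confined to a collar of the pinching curves whose diameter shrinks as $\ell_t(\gamma) \to 0$, so any fixed basepoint $p' \in X'$ eventually lies in the isometric region of $\phi_t$.'' This is false. In Theorem~\ref{thm:app}, the region where $\phi_L$ is non-conformal in the domain is $C_{(w_i,1)}(\gamma'_i)$, the horoball collar between the horocycles of length $w_i = 2L(i)/\pi$ and of length $1$. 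Its \emph{outer} boundary, the horocycle of length $1$, is a fixed horocycle in $X'$ independent of $t$; as $w_i \to 0$ this region \emph{grows} toward the cusp rather than shrinking. Consequently, if $p'$ sits inside the horoball of boundary length $1$ around a cusp coming from $\mathcal{Q}$, the point $p'$ stays in the non-conformal region for all $t$, and the prescription $(p_t, \textbf{v}_t) := (\phi_t(p'), d\phi_t(\textbf{v}'))$ does not furnish a unit vector, nor does criterion (3) apply.

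The paper avoids this by not pushing the arbitrary $p'$ forward at all. It instead places the base vector $(q_t, \textbf{w}_t)$ at a \emph{designated} location---distance $\eta/2$ from the foot of a common perpendicular on a non-pinched boundary $\alpha$ of $P$, inside the $\eta$-collar where Theorem~\ref{thm:app}(3b) guarantees $\phi_t$ is an isometry---and obtains a convergent path $(X_t, q_t, \textbf{w}_t) \to (X', q', \textbf{w}')$ for this specific target. It then passes from $(q', \textbf{w}')$ to the arbitrary $(p', \textbf{v}')$ by conjugating the corresponding discrete groups by the fixed element $g \in \PSL_2(\R)$ with $g\,\Phi^{-1}(X',q',\textbf{w}')\,g^{-1} = \Phi^{-1}(X',p',\textbf{v}')$, using that Chabauty convergence is preserved under conjugation. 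You need this (or an equivalent) last step; without it, your proof only handles basepoints $p'$ that lie outside the unit-length horoball neighborhoods of the cusps produced by pinching. A secondary, lesser issue: the claim that continuity on $[t_1,1)$ ``follows from the continuous dependence of the appendix's construction on $\ell_t$'' is not substantiated; the paper instead exhibits explicit $(\epsilon,R)$-relations by overlaying the right-angled hexagons of $h_s(P)$ and $h_t(P)$ in a common Fermi coordinate.
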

	\begin{proof}
	We first produce some continuous convergent path $(X_t, q_t, \textbf{w}_t) \to (X', q', \textbf{w}')$ as $t \to 1$ for some choices of base vectors. For simplicity, we assume that $\mathcal{Q}$ is just a single simple closed geodesic $\gamma$. We apply Proposition \ref{FN-change} to continuously vary the length of $\gamma$ and the base vectors until $\ell_{X_t}(\gamma) < 1/2$. Suppose for now that $\gamma$ is separating and $X'$ is not a one-cusp cylinder. Consider the pair of pants $P$ in the decomposition of $S$ that is adjacent to the cusp corresponding to $\gamma$ in $X'$. Let $\alpha$ be a boundary component or a cusp of $P$ which is not $\gamma$. If $q^0_t \in X_t$ is the point on $h_t(\alpha)$ that is the endpoint of a common perpendicular between $h_t(\alpha)$ and $h_t(\gamma)$, we choose $(q_t, \textbf{w}_t) \in T^1(X_t)$ to be the unit vector tangent to the geodesic ray starting perpendicularly to $h_t(\alpha)$ at  $q_t^0$ at distance $\eta/2$ from $q_t^0$, where $\eta >0 $ is the constant specified in Theorem \ref{thm:app}(3b) in the Appendix. 
	
	To see continuity along the path $(X_t, q_t, \textbf{w}_t)$, we will build a $(\epsilon,R)$-relation between $(X_s,q_s,\textbf{w}_s)$ and $(X_t, q_t, \textbf{w}_t)$ when $|s-t|$ is very small. For  $t \in [0,1)$, cutting along the seams of $h_t(P)$, we have two isometric right-angled hexagons and denote one such hexagon by $H_t$. Let $\alpha_t$ be the side of $H_t$ that is half of $h_t(\alpha)$, $\gamma_t$ half of $h_t(\gamma)$, and $c_t$ the side of $H_t$ corresponding to the seam of $h_t(P)$ that is not incident to $h_t(\gamma)$. 
	
	 Now, for $0<s<t$, we align two hexagons $H_s$ and $H_t$ in the common Fermi coordinate with  $c_s$ and $c_t$ in the horizontal line so that $\alpha_s$ coincides with $\alpha_t$ as illustrated below. Parametrize the horizontal line by unit speed starting at the endpoint of $\alpha_s$ on $c_s$.  For a point $a \in H_s \cup H_t$, let $R(a)$ be the point in $H_s \cap H_t$ closest to $a$. Define $\mathfrak{R}_{s,t} := \{ (x,R(x)) : x \in H_s   \} \cup  \{ (R(y),y) : y \in H_t   \}$. 
	
	\begin{center}
	\begin{figure}[h]
	\includegraphics[scale=0.7]{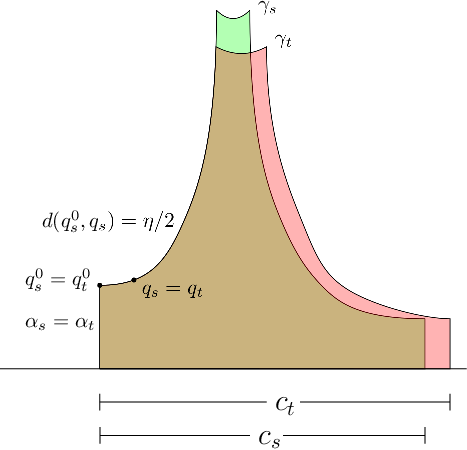}\label{fig: rel}
	\caption{Two hexagons $H_s$ (green) and $H_t$ (red) placed on a common Fermi coordinate with respect to $c_s$ (parametrized with unit speed) so that $\alpha_s = \alpha_t$. The intersection of $H_s$ and $H_t$ is shaded in brown. In the Fermi coordiates with respect to $c_s$, the ordered pair $(\nu,\rho)$ represents the point perpendicular to $c_s(\nu)$ at  distance $\rho$.}
	\end{figure}
	\end{center}
	\vspace{-2em}
	
	By doubling $\mathfrak{R}_{s,t}$ along the boundary of $H_s$ and $H_t$, we define an $(\epsilon_{s,t},R_{s,t})$-relation between $(h_s(P),q_s,\textbf{w}_s)$ and $(h_t(P),q_t,\textbf{w}_t)$ where $\epsilon_{s,t} \to 0$ and $R_{s,t}$ can be chosen to approach infinity as $|s-t| \to 0$. Extending the relation to the rest of $X_s$ and $X_t$ by isometry, we deduce that our path $(X_t, q_t, \textbf{w}_t)$ is continuous. 
	
	Now, we proceed to show that $(X_t, q_t, \textbf{w}_t)$ has a limit when $t \to 1$. Let $\text{Tr}_{t}(X')$ be the subsurface of $X'$ obtained by removing the open horoball neigborhood with horocyclic boundary of length $\frac{2\ell_t(\gamma)}{\pi}$ from the cusp corresponding to $\gamma$. Theorem \ref{thm:app} guarantees a quasiconformal homeomorphism
	 $\phi_t: \text{Tr}(h'(P)) \to h_t(P)$ and we extend it to a quasiconformal homeomorphism between the rest of the surfaces
	\[ \phi_t: \text{Tr}_t(X') \to X_t\]
	 by isometry. Then, $\phi_t$ still has dilatation $\leq 1+ 2\ell_t(\gamma)^2$ and, by our choice of $\eta$, the map $\phi_t$ is conformal on the ball $B(q_t, \eta/2) \subset C_{\eta}(h'(\gamma))$. Setting $q' = \lim\limits_{t\to1} \phi_t(q_t)$ and $\textbf{w}' = \lim\limits_{t\to1} d\phi_t(\textbf{w}_t)$, we have $(X_t, q_t,\textbf{w}_t) \to (X', q',\textbf{w}')$ as $t\to 1$ by Criterion (3) of Proposition \ref{conv-crit}. 
	
	If $X'$ is a one-cusp cylinder, let $F_t \subset X_t$ be the half-infinite funnel bounded by $h_t(\gamma)$. Then, by Corollary \ref{thm:app2}, there is a quasiconformal  homeomorphism
	\[ F_t: \text{Tr}_t(X') \to F_t \]
	with dilatation $\leq 1+ \ell_t(\gamma)^2$ that acts conformally outside the horocyclic neighborhood with boundary length 1. We then choose our base vector in this region and the remaining of the proof works the same way.
	
	If the curve $\gamma \in \mathcal{Q}$ is not separating, we modify the definition of $\text{Tr}_t(X')$ by instead removing from $X'$ the open horoball neigborhood with horocyclic boundary of length $\frac{2\ell_t(\gamma)}{\pi} +\epsilon_t$ for some sequence of small $\epsilon_t > 0$ such that $\epsilon_t\to 0$. Then, by restricting the domain of the map in Theorem \ref{thm:app}, there is a quasiconformal embedding $\phi_t: \text{Tr}_t(X') \to X_t$ and the remainder of the proof works as the separating case.

Finally, let 
\begin{equation*}
		\Gamma = \Phi^{-1}(X', p',\textbf{v}'), \quad
		\Gamma'_t = \Phi^{-1}(X_t, q_t, \textbf{w}_t), \quad
		\Gamma'' = \Phi^{-1}(X', q', \textbf{w}')
\end{equation*}
be the subgroups in $\PSL_2(\R)$ corresponding to the indicated vectored surfaces, where $\Phi$ is defined as in the map (\ref{chabauty-corr}) in Section \ref{sec: chab}. Let $g \in \PSL_2(\R)$ be such that $g\Gamma'' g^{-1} = \Gamma$. The convergence $\Gamma'_t \to \Gamma''$ implies that $g \Gamma'_t g^{-1} \to g\Gamma'' g^{-1} = \Gamma$ as $t \to 1$. Thus, it follows that
\[ (X_t, p_t, \textbf{v}_t):= \Phi(g\Gamma'_t g^{-1} ) \to \Phi(\Gamma) = (X', p', \textbf{v}') \]
as desired. 
	\end{proof}

\section{Theorem \ref{globally connected}, its variants, and some remarks} \label{proof}
We first note that any choice of base vector of $\bH^2$ is equivalent in $\framedH2$ since the isometry group $\PSL_2(\R)$ acts by isometry transitively on $T^1\bH^2$. From now on, let us fix once and for all a base vector $(p_{base},\textbf{v}_{base}) \in T^1\bH^2$. 

Lemma \ref{converge-H2} below is a characterization of sequences converging to $(\bH^2, p_{base}, \textbf{v}_{base})$ in terms of the injectivity radius. Recall that the \textit{injectivity radius} of a point $p$ in a hyperbolic surface $X$, denoted $\inj_X(p)$, is the supremum of the radius $r$ so that the open hyperbolic disk $B_X(p,r)$ isometrically embeds in $X$. 

\begin{lemma}\label{converge-H2}
A path $(X_t,p_t,\textbf{v}_t)$ of vectored surfaces in $\framedH2$, where $t \in [0,1)$, converges to $(\bH^2, p_{base}, \textbf{v}_{base})$ as $t \to 1$ if and only if $\inj_{X_t}(p_t) \to \infty$ as $t \to 1$.
\end{lemma}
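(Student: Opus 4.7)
My plan is to prove the two implications separately, both reducing to suitable criteria in Proposition \ref{conv-crit}. For the reverse direction ($\inj_{X_t}(p_t)\to\infty \Rightarrow$ convergence), fix any $R>0$; for $t$ sufficiently close to $1$, $\inj_{X_t}(p_t)>R$, so the exponential map at $p_t$ restricts to an isometric diffeomorphism from the disk of radius $R$ in $T_{p_t}X_t$ onto $B_{X_t}(p_t,R)$. Identifying $T_{p_t}X_t$ with $T_{p_{base}}\bH^2$ by the orientation-preserving linear isometry sending $\textbf{v}_{base}$ to $\textbf{v}_t$ and post-composing with $\exp_{p_{base}}$ gives an isometric embedding $\varphi_t:B_{\bH^2}(p_{base},R)\to X_t$ taking $(p_{base},\textbf{v}_{base})$ to $(p_t,\textbf{v}_t)$. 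The graph of $\varphi_t$ is a $(0,R)$-relation between the two vectored surfaces, and since $R$ is arbitrary, criterion (2) of Proposition \ref{conv-crit} yields the desired convergence.

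For the forward direction ($\Rightarrow$), criterion (4) of Proposition \ref{conv-crit} furnishes $R_t\to\infty$ and smooth orientation-preserving embeddings $\psi_t:B_{\bH^2}(p_{base},R_t)\to X_t$ with $\psi_t^{\ast}g_{X_t}\to g_{\bH^2}$ in $C^\infty$, $\psi_t^{-1}(p_t)\to p_{base}$, and $d\psi_t^{-1}(\textbf{v}_t)\to\textbf{v}_{base}$. Since the domain is simply connected, I lift $\psi_t$ through the universal covering $\pi_{\Gamma_t}:\bH^2\to X_t$ to an injective smooth map $\tilde\psi_t:B_{\bH^2}(p_{base},R_t)\to\bH^2$, normalized by $\tilde\psi_t(\psi_t^{-1}(p_t))=p_{base}$; since $\pi_{\Gamma_t}$ is a local isometry, $\tilde\psi_t^{\ast}g_{\bH^2}\to g_{\bH^2}$ in $C^\infty$ as well. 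After post-composing $\tilde\psi_t$ with a small element of $\PSL_2(\R)$ that aligns its derivative at $\psi_t^{-1}(p_t)$ with the standard frame at $p_{base}$, a standard rigidity argument for near-isometries forces $\tilde\psi_t\to\mathrm{id}$ uniformly on compact subsets of $\bH^2$. For any fixed $R>0$ and large $t$, the image $\tilde\psi_t(B_{\bH^2}(p_{base},R))$ therefore contains $B_{\bH^2}(p_{base},R-o_t(1))$. Because $\psi_t$ is injective, this image is disjoint from each $g$-translate for $g\in\Gamma_t\setminus\{1\}$ (using torsion-freeness to rule out fixed points in the would-be overlap), so $d_{\bH^2}(p_{base},g\cdot p_{base})\ge 2R-o_t(1)$. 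Combined with the standard identity $\inj_{X_t}(p_t)=\tfrac{1}{2}\inf_{g\in\Gamma_t\setminus\{1\}}d_{\bH^2}(p_{base},g\cdot p_{base})$, this yields $\inj_{X_t}(p_t)\ge R-o_t(1)$; since $R$ is arbitrary, $\inj_{X_t}(p_t)\to\infty$.

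The main obstacle is the rigidity step, which promotes $C^\infty$-convergence of $\tilde\psi_t^{\ast}g_{\bH^2}$ together with a one-point position-and-frame match to $C^0$-convergence $\tilde\psi_t\to\mathrm{id}$ on compacta. In the constant-curvature setting this can be handled by integrating the geodesic spray, or more directly by observing that a genuine local isometry of $\bH^2$ extends to a global isometry determined by its value and derivative at a single point; hence an approximate local isometry with matched one-point data is approximately a global isometry on bounded domains. The small $\PSL_2(\R)$ frame-alignment uses the orientation-preservation and unit-vector hypotheses in criterion (4) to ensure this correction tends to the identity as $t\to 1$.
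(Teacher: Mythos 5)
Both directions of your proof are correct, but they diverge in how much machinery they use — so it is worth comparing with the paper's argument.

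Your reverse implication ($\inj_{X_t}(p_t)\to\infty \Rightarrow$ convergence) is essentially the paper's: once the injectivity radius exceeds $R$, the constant-curvature rigidity of the exponential map gives a genuine pointed, vector-preserving isometric embedding $B_{\bH^2}(p_{base},R)\hookrightarrow X_t$, i.e.\ a $(0,R)$-relation, and $R$ was arbitrary. (The phrasing ``post-composing with $\exp_{p_{base}}$'' should really be ``pre-composing with $\exp_{p_{base}}^{-1}$'', but the intended map is clear.)

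Your forward implication takes a genuinely different and considerably more elaborate route than the paper's. The paper argues by a soft contradiction: if $\inj_{X_{t_j}}(p_{t_j})$ stayed below some $R$, then $B_{X_{t_j}}(p_{t_j},R)$ would contain a homotopically essential loop; but criteria (3)/(4) of Proposition \ref{conv-crit} force this ball, for $t_j$ close to $1$, to sit inside the image of an embedded hyperbolic disk, which is simply connected — so the loop would be nullhomotopic in $X_{t_j}$, contradiction. Your argument instead lifts the near-isometric embedding $\psi_t$ to the universal cover, normalizes the pointed frame, invokes a rigidity-of-near-isometries statement to get $\tilde\psi_t\to\mathrm{id}$ on compacta, and converts the injectivity of $\psi_t$ into disjointness of $\Gamma_t$-translates of almost-round balls, hence a lower bound on minimal translation length and therefore on $\inj_{X_t}(p_t)$. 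This works and gives quantitative control, but it rests on two steps that each need a sentence of care: (i) the rigidity step you flag (promoting $C^{\infty}$-closeness of the pullback metric plus one-point frame data to $C^0$-closeness to the identity), which is standard but not free; and (ii) the ``small $\PSL_2(\R)$'' frame correction, which implicitly conjugates $\Gamma_t$ by an element tending to $1$ and thus moves the basepoint by $o_t(1)$ — the $1$-Lipschitz continuity of injectivity radius absorbs this, but it should be said. The paper's topological contrapositive sidesteps both issues; your approach buys an explicit rate $\inj_{X_t}(p_t)\geq R-o_t(1)$ at the cost of the rigidity lemma and the conjugation bookkeeping.
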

\begin{proof}
Let $R_t = \inj_{X_t}(p_t)$. First, suppose that $(X_t, p_t,\textbf{v}_t) \to (\bH^2, p_{base}, \textbf{v}_{base})$ but that there is $R>0$ such that $R_{t_j} < R$ for some subsequence ${t_j}$. Thus, $B_{X_{t_j}}(p_{t_j},R)$ contains a homotopically nontrivial loop, and so there can be no diffeomorphism between any metric disk in $\bH^2$ and $B_{X_{t_j}}(p_{t_j},R)$, which is a contradiction. Conversely, if $R_t \to \infty$, then there is clearly an isometry between $B_{X_t}(p_t,R_t)$ and $B_{\bH^2}(p_{base}, R_t)$ taking $\textbf{v}_t$ to $\textbf{v}_0$, and the result follows.
\end{proof}

We are now ready to prove Theorem \ref{globally connected}.

\begin{proof}[Proof of Theorem \ref{globally connected}]
We introduce a temporary notation to make the proof more concise. For two hyperbolic surfaces $X$ and $Y$, we say $X\sim_C Y$ if there exist choices of base vectors $(p,\textbf{v}) \in T^1X$, $(q,\textbf{w}) \in T^1_qY$ for which $(X,p,\textbf{v})$ and $(Y,q,\textbf{w})$ belong to the same path component of $\framedH2$. Clearly, $\sim_C$ is reflexive and symmetric. As we are allowed to move base vectors (Lemma \ref{move-basepoint}), $\sim_C$ is also transitive. Hence, it is an equivalence relation. 

Our strategy is to show that $X \sim_C \bH^2$ for any vectored hyperbolic surface $(X,p, \textbf{v}) \in \framedH2$. We are done if $X$ is simply connected, in which case $X$ is isometric to $\bH^2$. If $X$ is topologically a cylinder, then $X$ is isometric to a quotient of $\bH^2$ by some cyclic subgroup generated by either a single parabolic-type or hyperbolic-type isometry in $\PSL_2(\R)$. In either case, we can move the basepoint along a path which asymptotically increases the injectivity radius without bound and thus $X \sim_C \bH^2$ by Lemma \ref{converge-H2}.

Now, we can suppose that $X$ admits a (possibly empty) geodesic pants decomposition $\mathcal{P}$ as in Theorem \ref{geo-pants} such that there is a component $Y$ of $X \setminus \mathcal{P}$ whose closure is a generalized pair of pants. Let $Y_0$ be the pair of pants with three cusps. Choose some base vector $(p_0,\textbf{v}_0) \in T^1Y_0$. Then, applying Lemma \ref{pinch} as we shrink the boundary curves of $Y$ to zero, we produce a continuous path from $(X, p',\textbf{v}')$ to $(Y_0, p_0,\textbf{v}_0)$ for some choice $(p',\textbf{v}') \in T^1X$ given by the lemma. Thus, $X \sim_C Y_0$.

It then remains to show that $Y_0 \sim_C \bH^2$. To do this, consider the geodesic pair of pants $Y_1$ all of whose boundary lengths are 1 and the hyperbolic surface $\widetilde{Y_1}$ obtained by gluing a funnel to each boundary component of $Y_1$. On one hand, we can take a base vector out a funnel in $Y_1$, and so $Y_1 \sim_C \bH^2$. On the other hand, since $Y_1 \subset \widetilde{Y}_1$, we return to the previous case, shrink the boundary curves of $Y_1$ and show that $Y_1 \sim_C Y_0$. Thus, $Y_0 \sim_C \bH^2$ as desired.
\end{proof}

Finally, we record some path-connected subspaces of $\framedH2$, which might be of some interest.

\begin{proposition}
The following subspaces of $\framedH2$ are path-connected:
\begin{enumerate}
\item For a fixed hyperbolic surface $S$ of finite topological type without boundary, the subspace of vectored hyperbolic surfaces diffeomorphic to $S$:
 \[ \framedH2_{\approx}(S) := \{ (X, p, \textbf{v}) \in \framedH2: X \approx S  \}.\]
 \item If $S$ is a hyperbolic surface of finite type, possibly with geodesic boundary, the subspace of vectored hyperbolic surfaces into which the convex core of $S$ embeds $\pi_1$-injectively: 
 \[ \framedH2_{\supset}(S) := \{ (X,p,\textbf{v}) \in \framedH2: \text{there exists a $\pi_1$-injective smooth embedding } \phi: \CC(S) \hookrightarrow X\}\]
 \item The subspace of vectored hyperbolic surfaces whose area is bounded above by some fixed constant.
 \item The subspace of vectored hyperbolic surfaces with finite area.
\end{enumerate}
\end{proposition}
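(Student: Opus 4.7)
The plan is to adapt the pinching and Fenchel--Nielsen deformation strategy used for Theorem \ref{globally connected}, but keep each path inside the corresponding subspace rather than passing through $\bH^2$. For (1), since $S$ has finite topological type, any pants decomposition $\mathcal{P}$ of $S$ is finite. Every $(X, p, \textbf{v}) \in \framedH2_{\approx}(S)$ admits a marking $h : S \to X$, so $\framedH2_{\approx}(S) = \framedH2(S, \mathcal{P}, \mathcal{P})$. Proposition \ref{FN-change} with $\mathcal{Q} = \mathcal{P}$ then supplies a continuous surjection from the path-connected space $\mathcal{T}(S, \mathcal{P}, \mathcal{P}) \times T^1 S$ onto $\framedH2_{\approx}(S)$, which finishes (1).

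For (2), given $(X, p, \textbf{v}) \in \framedH2_{\supset}(S)$ with a $\pi_1$-injective embedding $\phi : \CC(S) \hookrightarrow X$, I would first isotope $\phi$ so that $\partial \phi(\CC(S))$ consists of disjoint simple closed geodesics in $X$; this is standard, using $\pi_1$-injectivity to send each boundary curve to its unique essential geodesic representative. Then extend $\partial \phi(\CC(S))$ to a pants decomposition $\mathcal{P}$ of $X$ via Theorem \ref{geo-pants}, move the basepoint into $\phi(\CC(S))$ using Lemma \ref{move-basepoint}, and apply Lemma \ref{pinch} with the finite set $\mathcal{Q} = \partial \phi(\CC(S))$. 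The Gromov--Hausdorff limit is diffeomorphic to $\CC(S)$ with each boundary component replaced by a cusp, while each intermediate surface has the same topology as $X$, so the $\pi_1$-injective embedding of $\CC(S)$ persists and the path remains in $\framedH2_{\supset}(S)$. Part (1), applied to the canonical target $\CC(S)$-with-cusps (a finite-type surface without boundary), then connects all such limits within $\framedH2_{\supset}(S)$.

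For (3) and (4), a complete finite-area hyperbolic surface has finite topological type with only cusps as ends (Gauss--Bonnet rules out funnels and convex-core boundary), and area $2\pi(2g - 2 + n)$ depending only on the topology. The common target is the thrice-punctured sphere $Y_0$, which lies in both subspaces. Given $(X, p, \textbf{v})$ in either subspace, choose a pants decomposition $\mathcal{P}$ of $X$, use Lemma \ref{move-basepoint} to move the basepoint into a single pair of pants $Y \subset X \setminus \mathcal{P}$, and apply Lemma \ref{pinch} with $\mathcal{Q} = \partial Y \cap \mathcal{P}$. The GH limit is $Y$ with its geodesic boundaries replaced by cusps, i.e., $Y_0$. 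Along the path each $X_t$ shares the topology of $X$ and hence its area, so the bounded-area condition (for (3)) and the finite-area condition (for (4)) are preserved; the base vector on $Y_0$ can then be freely adjusted by Lemma \ref{move-basepoint} since $T^1 Y_0$ is connected.

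The principal obstacle in all four parts is correctly threading the base vector through each pinching limit so that the constructed paths concatenate coherently. This is built into Lemma \ref{pinch}, provided the basepoint is consistently kept on the component one intends to keep in the limit. A secondary technicality in (2) is the initial isotopy of $\phi$ to geodesic boundary position, which is standard for $\pi_1$-injective embeddings of compact surfaces with boundary.
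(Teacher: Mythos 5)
Your argument is essentially the same as the paper's, using the same three tools (Lemma \ref{move-basepoint}, Proposition \ref{FN-change}, Lemma \ref{pinch}) in the same roles; parts (1), (3), (4) match the paper almost verbatim.

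For (2) there is a small organizational difference worth noting. You pinch $\partial\phi(\CC(S))$ directly and then invoke part (1) to connect the resulting limits, all of which lie in $\framedH2_{\approx}(S')$; the paper instead first uses Proposition \ref{FN-change} to normalize the Fenchel--Nielsen coordinates of $X_S$ to agree with those of $\CC(S)$, so that the pinching path lands on the \emph{single} canonical $S'$ with no need to appeal to part (1) afterwards. Both routes are valid, and yours correctly observes that $\framedH2_{\approx}(S')\subset\framedH2_{\supset}(S)$, so the detour through part (1) stays inside the subspace. One detail in your step for (2) is phrased slightly too optimistically: you say to ``isotope $\phi$ so that $\partial\phi(\CC(S))$ consists of disjoint simple closed geodesics,'' but a boundary component of $\CC(S)$ can be sent under a $\pi_1$-injective embedding to a curve that is homotopic to a cusp of $X$ and hence has no geodesic representative. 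In that case one simply has nothing to pinch for that component, and the limit acquires a cusp there for free. The paper's phrase ``simple closed geodesics (or cusps) homotopic to the image under $\phi$'' quietly absorbs this case, and your argument survives the same way, but it would be worth flagging explicitly that $\mathcal{Q}$ should consist only of the boundary components of $X_S$ that actually are closed geodesics.
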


\begin{proof}
For (1), fix a pants decomposition $\mathcal{P}$ of $S$ and this follows from Proposition \ref{FN-change} by taking $\mathcal{Q} = \mathcal{P}$. 

For (2), let $S'$ be the hyperbolic surface obtained by shrinking every boundary component (if any) of $\CC(S)$ to a cusp. Note that $\framedH2_{\supset}(S)$ contains a vectored $S'$. Now, given any $(X, p,\textbf{v}) \in \framedH2_{\supset}(S)$ with a $\pi_1$-injective embedding $\phi: \CC(S) \xhookrightarrow{} X$, we let $X_S$ be the geodesic subsurface of $X$ (possibly with geodesic boundary) such that $int(X_S)$ is homotopic to $\phi(int(\CC(S)))$. This map also induces the Fenchel-Nielsen coordinates for $X_S$ by considering the simple closed geodesics (or cusps) homotopic to the image under $\phi$ of pants curves in $S$. Applying Lemma \ref{FN-change} to change these Fenchel-Nielsen coordinates for $X_S$ to match those of $S$ and Lemma \ref{pinch} to shrink all boundary curves of $\CC(X_S))$ to cusps, we have a path from $(X, p,\textbf{v}) $ to a vectored $S'$.

By pinching the boundary curves of a single pair of pants in a pants decomposition to cusps, we produce a path from a vectored hyperbolic surface with finite area to a vectored thrice punctured sphere (Lemma \ref{pinch}). Since this process does not increase area, items (3) and (4) follow immediately. 
\end{proof}

\begin{remark}
As in \cite{Biringer2021}, we may consider the space of vectored hyperbolic 2-orbifolds or, equivalently, the Chabauty space of non-elementary subgroups of $G = \PSL_2(\R)$. However, this space is not path-connected. For example, if $S$ is a sphere with three cone points of some fixed order, then
\[  \Sub_S(G)  := \{ \Gamma \leq G: \bquotient{\Gamma }{\bH^2} \approx S \}\]
is both open and closed. That it is open follows from compactness of $S$: by the orbifold version of (2) in Proposition \ref{conv-crit}, any vectored 2-orbifold close to, say, $(S, p,\textbf{v})$ admits a diffeomorphism $B_S(p,R) \to S$, where $R > \diam(S)$, so in fact it must be diffeomorphic to $S$. It is closed because the moduli space of $S$ is a point and $T^1S$ is compact. By moving the base vector, we see that  $\Sub_S(G)$ is path-connected. Thus, $\Sub_S(G)$ is its own path component.
\end{remark} 

\begin{remark} \label{rem: higherd}
As mentioned in the introduction, the space $\framedHn$ of (isometry classes of) \textit{framed hyperbolic $n$-manifolds} can also be defined for higher dimensions ($n \geq 3$). Here, a framed manifold means the manifold is equipped with a choice of an orthonormal basis of the tangent space at the basepoint. The convergence criteria as in Proposition \ref{conv-crit} still hold. 

However, $\framedHn$ is not path-connected. Indeed, if $M$ is a closed hyperbolic $n$-manifold, then we claim that
\[ \{ [(M, p ,\textbf{e})] : p \in M, \textbf{e} \text{ is an orthonormal basis of } T^1_p(M) \}\] 
is a path component in $\framedHn$. By moving the baseframe, we have that this subspace is path-connected. Moreover, it is closed by compactness of $T^1M$ and open,  since a framed hyperbolic manifold $(M',p',\textbf{e}')$ close enough to $(M,p,\textbf{e})$ in $\framedHn$ has an embedding $B_M(p,R) \to M$. By taking $R > \diam(M)$, we have that $M'$ must be diffeomorphic to $M$, hence isometric to $M$ by Mostow's rigidity. 
\end{remark}

\appendix

\section{Quasiconformal maps between pairs of pants}\label{appendix}
For the sake of completeness, we record some properties of the quasiconformal map between pairs of pants constructed in \cite{Buser2014} and then use them to define a quasiconformal map between a truncated one-cusp cylinder and a half-infinite funnel. These are used in the proof of Lemma \ref{pinch}. A few terminologies are required for the setting.

 Given $L: \{1,2,3 \} \to \R_{\geq 0}$, let $P_{L}$ be the generalized geodesic pair of pants with boundary geodesics $\gamma_1, \gamma_2, \gamma_3$ such that $\ell_{P_{L}}(\gamma_i) = L(i)$. Furthermore, if $\mathcal{C} \subset \{1,2,3\}$, we let $P_{ \mathcal{C},L}$ be the geodesic generalized pair of pants with boundary components $\gamma'_1, \gamma'_2, \gamma'_3$ such that 
\[\ell_{P_{\mathcal{C},L}}(\gamma'_i) = \begin{cases}
	L(i) & \text{if } i \not \in \mathcal{C} \\
	0 & \text{if } i \in \mathcal{C}.
\end{cases} \]

For a boundary geodesic $\gamma_i$ of $P_{L}$ (and analogously for $\gamma'_i$ of $P_{\mathcal{C},L}$) and $w > 0$ smaller than the Margulis constant, we have the \textit{$w$-collar} 
\[ C_{w}(\gamma_i) := \{ x \in P_{L}: d(x, \gamma) < w  \} \]
and the \textit{$\gamma_i$-equidistant curve} 
\[ \gamma_i^w := \{ x \in P_{L}: d(x, \gamma) = w\}.\]
If $\gamma_i$ represents a cusp of $P_{L}$ (and similarly for $\gamma'_i$ of $P_{\mathcal{C},L}$), we let $C_w(\gamma_i)$ be the open horoball neighborhood of $\gamma$ with horocyclic boundary of length $w$. In this case, let $\gamma_i^w$ be the horocycle  of $\gamma_i$ length $w$. We will also denote by $C_{(w_1, w_2)}(\gamma_i)$, where $0< w_1 < w_2$, the open cylindrical subset of the horoball neighborhood of $\gamma_i$ bounded by horocycles of length $w_1$ and $w_2$.

Each $\gamma_i$  (and analogously for $\gamma'_i$) is parametrized on $[0,1]$ by constant speed, and it is oriented so that $P_{L}$ is to the left and that $\gamma_i(0)$ is the endpoint of the seam joining $\gamma_i$ and $\gamma_{i+1}$ (the indices are read mod 3). This parametrization extends to the curves $\gamma_i^w$ as follows: $\gamma_i^w(t)$ is the point on $\gamma_i^w$ that intersects the geodesic ray in $P_L$ perpendicular to $\gamma_i$ starting at $\gamma_i(t)$ for $t \in [0,1]$. We call this the \textit{standard parametrization}.  

Finally, we define the \textit{$\mathcal{C}$-truncated pair of pants} to be  
\[ \text{Tr}(P_{\mathcal{C},L}) := P_{\mathcal{C},L} \setminus \bigcup_{i \in \mathcal{C}} C_{\frac{2L(i)}{\pi}}(\gamma'_i). \]

We now recall the following result by Buser-Makover-M{\"u}tzel-Silhol \cite{Buser2014}, rephrased slightly for our purpose (see Proposition \ref{conv-crit}).

\begin{theorem}[\cite{Buser2014}]\label{thm:app}
Given $L: \{1,2,3 \} \to \R_{\geq 0}$ and $\mathcal{C} \subset \{1,2,3\}$, if $\epsilon:= \max\limits_{i \in \mathcal{C}} L(i) < 1/2$, there is a quasiconformal homeomorphism
\begin{equation*}\label{eq: qi-map} \phi_L:  \text{Tr} (P_{\mathcal{C},L}) \to P_{L}
\end{equation*}
which has the following properties
\begin{enumerate}
	\item $\phi_L$ has the dilatation $q_{\phi_L} \leq (1 + 2\epsilon^2)^{|\mathcal{C}|}$
	\item  if $i \in \mathcal{C}$ has  $L(i) >0$, setting $w_i = 2L(i)/\pi$ and $\delta_i = \log\left( 2/L(i)\right) $, then 	$\phi_L(C_{(w_i,1)}(\gamma'_i)) = C_{\delta_i}(\gamma_i)$.  Moreover, $\phi_L$ respects the standard parametrization of the boundary of $C_{(w_i, 1)}(\gamma'_i)$ in the sense that $ \phi_L((\gamma'_{i})^{w_i}(t)) = \gamma_i(t) $ and $ \phi_L((\gamma'_{i})^{1}(t)) = (\gamma_i)^{\delta_i}(t)$ for all $t \in [0,1]$
	\item if $i \not\in \mathcal{C}$,  the map $\phi_L$ is an isometry on $C_{\eta_i}(\gamma'_i)$, where $\eta_i > 0$ is defined as follows
	\begin{enumerate}
	\item if $\gamma'_i$ is a cusp, set $\eta_i =1$
	\item if $\gamma'_i$ is a simple closed geodesic \footnote{This value of $\eta_i$ follows immediately from the proof of the map in Theorem 2.1 of  \cite{Buser2014} and a trigonometric formula of a trirectangle (see the Appendix of \cite{Buser1992}) by realizing that a collar of this width lies in the region $ABDH$ in Figure 2 of their paper.}, set $\eta_i = \tanh^{-1}\left(\frac{\tanh(\omega_i)}{\cosh(L_i/2)}\right)$ where $\omega_i = \sinh^{-1}\left(\frac{1}{\sinh(L_i/2)}\right)$ (for simplicity, $\eta_i = \log(2/L_i)$ works for $L_i<2$). 
	\end{enumerate}
\end{enumerate}
\end{theorem}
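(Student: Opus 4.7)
The plan is to follow the hexagon-based construction of Buser-Makover-M\"utzel-Silhol. Cutting both $\mathrm{Tr}(P_{\mathcal{C},L})$ and $P_L$ along their three seams, each decomposes into two isometric right-angled half-pants hexagons (with ideal vertices replaced by horocyclic arcs on the $\mathrm{Tr}(P_{\mathcal{C},L})$ side). It suffices to build a quasi-conformal homeomorphism $\Phi \colon H' \to H$ between corresponding hexagons that respects the prescribed side identifications; doubling across the three seams will then extend $\Phi$ to the global map $\phi_L$.

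For each $i \in \mathcal{C}$ with $L(i) > 0$, the core local piece is an explicit quasi-conformal map from the half-horocyclic annulus in $H'$ bounded by the relevant portions of $(\gamma'_i)^{w_i}$ and $(\gamma'_i)^{1}$ to the half-collar region in $H$ bounded by the corresponding portions of $\gamma_i$ and $\gamma_i^{\delta_i}$, with normalizations $w_i = 2L(i)/\pi$ and $\delta_i = \log(2/L(i))$. Writing both regions in matching normal-times-tangential coordinates, I would take $\Phi$ to act as the identity in the tangential direction and by a suitable reparametrization in the normal direction that matches boundary lengths up to an error of order $L(i)^2$. A pointwise computation of the Beltrami coefficient, exploiting the difference between $\cosh\rho$ in the collar metric and $e^{-s}$ in the cusp-annulus metric and using $L(i) < 1/2$, bounds the local dilatation by $1 + 2L(i)^2$. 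Multiplying over $i \in \mathcal{C}$ gives item (1), and by construction the map sends $(\gamma'_i)^{w_i}(t)$ to $\gamma_i(t)$ and $(\gamma'_i)^{1}(t)$ to $\gamma_i^{\delta_i}(t)$, yielding item (2). When $L(i) = 0$, a direct isometry of half-infinite horoball neighborhoods plays the same role.

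For each $i \notin \mathcal{C}$, the two boundaries have the same type and length, so $H'$ and $H$ are literally isometric on a full collar of $\gamma'_i$. The maximal width on which this isometry extends consistently inside the hexagon is forced by the geometry of the right-angled hexagon; the trirectangle identities from the Appendix of \cite{Buser1992} give the explicit value $\eta_i = \tanh^{-1}\!\left(\tanh(\omega_i)/\cosh(L(i)/2)\right)$ with $\omega_i = \sinh^{-1}(1/\sinh(L(i)/2))$, or $\eta_i = 1$ in the cusp case, yielding (3). On the remaining central region of the hexagon bounded by the inner boundaries of all the local models above, a bump-function quasi-conformal interpolation between the already-determined boundary parametrizations closes up $\Phi$, with its distortion absorbed into the bound already established by choosing the interpolation slab just inside the local models.

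The main obstacle will be the book-keeping needed to make the three local pieces and the central interpolation agree continuously and in a parametrization-preserving way along every equidistant curve, horocycle, and seam half simultaneously. The specific normalizations $w_i = 2L(i)/\pi$ and $\delta_i = \log(2/L(i))$ are dictated precisely by this matching requirement, and verifying it reduces to tracking that the $t = 0$ endpoints on $(\gamma'_i)^{w_i}$ and $\gamma_i^{\delta_i}$ lie on the correct seam, which follows from the standard parametrization convention that $\gamma_i(0)$ and $\gamma'_i(0)$ are the feet of the seam joining to $\gamma_{i+1}$. Once continuity and parametrization are checked at every interface, the doubling symmetry across the seams extends the hexagon map to the desired $\phi_L$ with all three claimed properties.
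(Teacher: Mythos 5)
The paper does not prove this theorem: it states it with attribution ``[\cite{Buser2014}]'' and explicitly says it is ``recalling'' the result of Buser--Makover--M\"utzel--Silhol. The only original contribution in the paper's version is the footnoted computation of $\eta_i$ in item (3b), traced to trirectangle identities in the Appendix of \cite{Buser1992} and Figure 2 of \cite{Buser2014}. So there is no ``paper's own proof'' against which to compare yours; what follows is an assessment of your reconstruction on its own terms.

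Your skeleton --- cut both pants along the seams into a pair of right-angled generalized hexagons, build a map hexagon-to-hexagon, and double across the seams --- is the right shape and is indeed how the map in \cite{Buser2014} is produced. The identification of the normalizations $w_i = 2L(i)/\pi$ and $\delta_i = \log(2/L(i))$ as being forced by boundary-length matching, and of (3b) as coming from the geometry of the trirectangle $ABDH$, is also correct.

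The genuine gap is in your ``central region'' step. You propose to close up the hexagon map by ``a bump-function quasi-conformal interpolation between the already-determined boundary parametrizations'' and to absorb the distortion ``by choosing the interpolation slab just inside the local models.'' That does not work as stated: the dilatation of a quasi-conformal interpolation between two given boundary parametrizations is governed by the quasi-symmetry of the boundary mismatch, not by the thickness of the slab, and making the slab thinner makes the dilatation \emph{worse}, not better. If the boundary parametrizations matched exactly you would need no interpolation; if they do not, no choice of thin slab saves the bound $(1+2\epsilon^2)^{|\mathcal{C}|}$, which is close to optimal. What the BMM-S construction actually does is write an explicit map in Fermi/horocyclic coordinates over the entire hexagon and compute the Beltrami coefficient pointwise, so that the $1+2\ell^2$ bound is a global computation rather than a local model plus an unbounded interpolation term. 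Relatedly, your phrase ``multiplying over $i\in\mathcal{C}$'' only makes sense if $\phi_L$ is obtained as a composition of $|\mathcal{C}|$ single-pinch maps (so that dilatations multiply); if instead you build one map with disjoint local models near each $\gamma'_i$, the correct global bound is the maximum, not the product. Either way, the central-region argument needs to be replaced by the explicit coordinate computation before the stated dilatation bound is justified.
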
   

\begin{figure}[h]
	\includegraphics{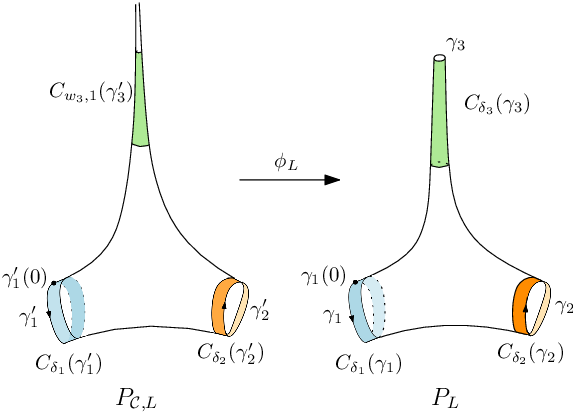}
	\caption{An illustration of the quasiconformal map $\phi_L:  \text{Tr} (P_{\mathcal{C},L}) \to P_{L}$ constructed in \cite{Buser2014}. Here, $\mathcal{C} = \{3\}$. For $i=1,2$, the $\eta_i$-collar neighborhood of $\gamma'_i$ is mapped isometrically to the $\eta_i$-collar neighborhood of $\gamma_i$, colored in blue and orange. The green collar neigborhood of $P_L$ is the image under $\phi_L$ of the horocyclic cylinder in $P_{\mathcal{C},L}$, shaded also in green.}
\end{figure}

Now, for $\ell>0$, let $F_{\ell}$ be the half-infinite cylinder with geodesic boundary $\gamma$ of length $\ell$ and let $F_0$ be the one-cusp cylinder with the cusp represented by $\gamma'$. We can similarly define the $w$-collar $C_w(\gamma) \subset F_{\ell}$, $C_w(\gamma') \subset F_0$, and $C_{(w,1)}(\gamma') \subset F_0$ for $0< w<1$. 

\begin{corollary}\label{thm:app2}
	If $0< \ell < 1/2$, setting $\ell^* = 2\ell/\pi$ and letting $\text{Tr}(F_0): = F_0 \setminus C_{\ell^\ast}(\gamma')$, then there is a quasiconformal homeomorphism
	\[ \phi: \text{Tr}(F_0) \to F_{\ell} \]
	that preserves the standard parametrization of the boundary.  Moreover, the dilatation is $q_{\phi}\leq 1+2\ell^2$.
\end{corollary}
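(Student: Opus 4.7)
The plan is to realize $\phi$ as the descent of an explicit holomorphic map on the universal cover. I would model $\text{Tr}(F_0)$ as the quotient $\{z \in \bH^2 : \operatorname{Im}(z) \leq 1\}/\langle z \mapsto z + \ell^*\rangle$ in the upper half-plane, so that the bounding horocycle sits at $\operatorname{Im}(z) = 1$ with length $\ell^*$, and model $F_\ell$ as $\{w \in \bH^2 : \operatorname{Re}(w) \geq 0\}/\langle w \mapsto e^\ell w\rangle$, so that the boundary geodesic is the positive imaginary axis modulo the identification, of length $\ell$.

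Next, I would take $\tilde\phi(z) := \exp\!\bigl((\pi/2)\, z\bigr)$ on the universal cover. Using $\ell^* = 2\ell/\pi$, the map is equivariant,
\[ \tilde\phi(z + \ell^*) \;=\; e^{(\pi/2)\ell^*}\,\tilde\phi(z) \;=\; e^\ell\,\tilde\phi(z), \]
and a routine check shows $\tilde\phi$ is a biholomorphism of the strip $\{0 < \operatorname{Im}(z) \leq 1\}$ onto the closed quadrant $\{\operatorname{Re}(w) \geq 0,\; \operatorname{Im}(w) > 0\}$, with inverse $w \mapsto (2/\pi)\log w$ on the principal branch. Therefore $\tilde\phi$ descends to a conformal diffeomorphism $\phi : \text{Tr}(F_0) \to F_\ell$ whose Beltrami coefficient vanishes identically, so $\phi$ is quasiconformal with $q_\phi = 1 \leq 1 + 2\ell^2$.

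To check the standard parametrizations, I would parametrize the bounding horocycle by $t = \operatorname{Re}(z)/\ell^* \in [0,1]$ and the bounding geodesic by $s = \log\operatorname{Im}(w)/\ell \in [0,1]$. Since $\tilde\phi(x + i) = i\, e^{(\pi/2) x}$, we get $s = (\pi/2) x/\ell = x/\ell^* = t$, so the two parametrizations coincide.

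I do not foresee any serious obstacle: the construction is a single explicit holomorphic map, and the three verifications---equivariance, the biholomorphism claim, and the boundary matching---are each one-line computations. The only delicate point is that the pre-factor $\pi/2$ in the exponential is forced by the relation $\ell^* = 2\ell/\pi$, which is precisely what aligns the parabolic period on the domain side with the hyperbolic translation length on the codomain side.
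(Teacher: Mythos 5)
Your proof is correct and takes a genuinely different route from the paper. The paper's proof applies the Buser--Makover--M\"utzel--Silhol collar map from Theorem~\ref{thm:app}(2) on $\overline{C_{(\ell^\ast,1)}(\gamma')}$, obtaining dilatation at most $1+2\ell^2$ there, and then extends it conformally over the funnel end by matching the horocyclic foliation of $\text{Tr}(F_0)$ with the equidistant foliation of $F_\ell$. You instead notice that $\text{Tr}(F_0)$ and $F_\ell$ are already biholomorphic: both half-open annuli have conformal modulus $1/\ell^\ast = \pi/(2\ell)$, and the equivariant map $\tilde\phi(z)=e^{(\pi/2)z}$ realizes the equivalence explicitly, descending to a conformal diffeomorphism with $q_\phi = 1$. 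This is a strictly stronger conclusion than the stated bound, and it would even slightly simplify the one-cusp-cylinder case of Lemma~\ref{pinch}, where the paper must place the base vector in the region where its map happens to be conformal; your map is conformal everywhere. As you remark, the exponent $\pi/2$---equivalently the truncation level $\ell^\ast = 2\ell/\pi$ fixed in \cite{Buser2014}---is exactly what forces the two moduli to agree. The shortcut is specific to the cylinder: for a genuine pair of pants, as in Theorem~\ref{thm:app}, a truncated cusped pants and the corresponding geodesic pants are not conformally equivalent, so a nontrivial quasiconformal map is unavoidable there.
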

\begin{proof}
Theorem \ref{thm:app} above guarantees a quasiconformal homeomorphism 
\[ \phi: \overline{C_{(\ell^\ast,1)}(\gamma')} \to \overline{C_{\log(\ell^\ast)}(\gamma)} \]
with dilatation $\leq 1+2\ell^2$, and $\phi$ respects the standard parametrization of the boundary components.  We extend $\phi$ to a homeomorphism
\[ \tilde{\phi}: \text{Tr}(F_0) \to F_\ell \]
as follows. Give the horocycle $(\gamma')^{1} \subset \text{Tr}(F_0)$ the standard parametrization. For each $t \in [0,1]$, let $u_t$ be the unit-speed geodesic ray in $F_0$ starting perpendicularly at $(\gamma')^1(t)$ and going away from the cusp. For $s \in (0,\infty)$, parametrize the horocycle $(\gamma')^{1+s}$ so that $(\gamma')^{1+s} \cap u_t = \{(\gamma)'^{1+s}(t)\}$.

Then, on  $\text{Tr}(F_0)\setminus C_{(\ell^\ast,1)}(\gamma')$, we set 
\[\phi((\gamma')^{1+s}(t)) := \gamma^{\log(\ell^\ast)+s}(t).\]
This sends the orthogonal foliation of $\text{Tr}(F_0)\setminus C_{(\ell^\ast,1)}(\gamma')$ by the horocycles of $\gamma'$ and the $u_t$ to the orthogonal folation of $F_{\ell}$ by the equidistant curves to $\gamma'$  and the geodesic rays perpendicular to $\gamma'$. Since this extension is conformal and agrees with $\phi$ on the boundary, the result follows.
\end{proof}

\begin{center}
	\begin{figure}[h]
		\includegraphics[scale=0.7]{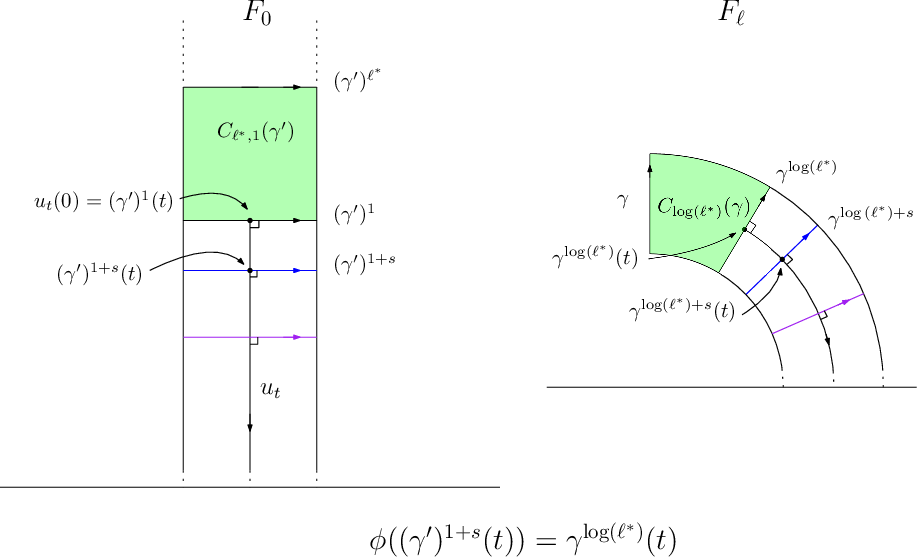}
		\caption{An illustration of the map $\phi: \text{Tr}(F_0) \to F_{\ell} $, unfolded in the upper half plane model, where the function is defined as in \cite{Buser2014} inside the green region and takes the orthogonal foliation by horocycles of $\gamma'$ in $F_0$ to the orthogonal by $\gamma$-equidistant curves in $F_{\ell}$ outside the green region.}
	\end{figure}
\end{center}

 \subsection*{Acknowledgements}
The author is indebted to his Ph.D. advisor Ian Biringer who introduced the topic and shared his insights with kind patience. The author would also like to thank Yi Huang and Guillaume Tahar for conversations, and the referee for useful comments that helped improve earlier drafts of this paper. 

\bibliographystyle{plain}
\small{
\bibliography{reference_arxiv}{}
}

\end{document}